\newtheorem{theorem}{Theorem}[section]
\newtheorem{lemma}[theorem]{Lemma}
\theoremstyle{definition}
\newtheorem{corollary}[theorem]{Corollary}
\newtheorem{example}[theorem]{Example}
\newtheorem{remark}[theorem]{Remark}
\def\b{{\beta}}
\def\A{{\mathcal{A}}}
\def\a{{\alpha}}
\def\d{{\delta}}
\def\g{{\gamma}}
\newcommand{\fracd}[2]{{\displaystyle\frac{#1}{#2}}}
\title{Double cosets in free groups}
\author{Elizaveta Frenkel \\ Vladimir N. Remeslennikov}
\begin{document}
\maketitle

%
%

%
%





\begin{abstract}
In this paper we study double cosets of finite rank free groups.
We focus our attention on cancellation types in double cosets and
their formal language properties.
\end{abstract}

Mathematics Subject Classification 2010: 20E05, 20F10, 20F65

Key words: double cosets, subgroups of free groups, regular
subsets, bounded cancellations.



\section{Introduction}

By the classical Nielsen-Schreier theorem every subgroup of a free
group is again a free group. One of the goals of this paper is to
study how one finitely generated free group $C$ can be embedded
into another free group $F$. Clearly, the type of embedding of a
subgroup $C$ into a group $F$ affects  left and right cosets, and
double cosets of type $CfC$, $f \in F$. A description of
combinatorial and formal language properties of these complexes is
the second goal of the present paper.

Similar questions have been studied widely as auxiliary ones, for
example, in papers \cite{2}, \cite{12}, \cite{15} in relation
to hyperbolicity of amalgamated products of groups, and in
\cite{3}, \cite{7}, \cite{8} in connection with the
conjugacy problem in amalgamated products.

There exist a lot of papers about the characterization of
embeddings of a subgroup $C$ in a group $F$, but we shall mention
only the closest to the subject of our paper. In \cite{1} was
shown that the property of a finitely generated subgroup $C$ to be
malnormal is algorithmically decidable in a free group $F$,
meanwhile \cite{4} provides an example of the undecidability of
this problem in the class of hyperbolic groups. Another related
result was proved in \cite{10}, where it was shown that a
random finitely generated subgroup of a free group $F$ is
malnormal.

The essential role in these studies is played by the structure of a Cayley graph of a group and
Schreier graphs for a subgroup $C$ in $F$. In particular, the embedding of $C$ in $F$ can be described in terms of  equations $E(f,f)$ and $E(f,g)$ (see section \ref{sub_basic}), lemmas \ref{le:e(gg)}, \ref{le:e(fg)}). In this paper we obtain
results on double cosets in a free group $F$ with respect to the type of embedding of a subgroup $C$ in $F$.
The main results are the following theorems:

{\bf Theorem \ref{th:bounded}.} { \it Let $F$ be a free group with a
finite basis $X$, and $C$ be a finitely generated subgroup of $F$
with a Nielsen set of generators $Y$. Suppose $f \in F
\smallsetminus C$ and $f$ is of minimal length in a double coset
$CfC$. Then there exists a natural number $k$ which can be
effectively determined by $C$ and $Y$ such that

\begin{itemize}
\item if $C_f = C \bigcap f^{-1} C f = 1$, then
multiplication in the complex $CfC$ is $k-$reduced, which means
$l_X(c_1) + l_X(f) + l_X(c_2) - l_X(c_1 f c_2) \le 2 k$, where
$c_1, c_2 \in C$ and $l_X(\cdot)$ is the length of elements in
$F(X)$.

\item if $C_f \neq 1$, then $CfT$ is $k-$reduced, where $T$ is
a relative Schreier transversal of $C$ in $F$ relative to $C_f$.
\end{itemize}
}

{\bf Theorem \ref{th:cfcreg}.} { \it Let $C$ be a finite rank subgroup of a free
group $F(X)$. Then the set of all $X-$reduced words representing
elements of $CfC$ is regular in $F(X)$. Moreover, an automaton which accepts this set can be constructed effectively by $C$ and $f$ if $C$ is $f-$malnormal, or by $C,f$ and a relative Schreier transversal $T$ of $C$ with respect to $C_f$, otherwise.}

\section{Preliminaries}\label{Section:preliminaries}

In this section, following \cite{6,7,11}, we give some basics
on regular subsets, cosets, and Schreier representatives of
finitely generated subgroups of a finite rank free group $F$. We
shall use the following notation throughout the paper.

Let $F = F(X)$ be a free group with basis $X=\{x_1,\dots, x_m\}$.
We identify  elements of $F$ with  reduced words  in the alphabet
$X \cup X^{-1}$ (i.e. \emph{$X-$reduced } words), and let $l_X(f)$
denote the length of an element $f \in F$, i.e. the number of
letters of the reduced word $f$ in $X \cup X^{-1}$.

Fix a subgroup $C = \langle h_1, \ldots, h_r \rangle$ of $F$
generated by finitely many elements $h_1, \ldots, h_r  \in F$ and
an arbitrary element $f \in F$.

The set $CfC = \{c_1 f c_2 | c_1, c_2 \in C \}$ is called a
\emph{double coset} and $f$ is a \emph{representative} of this
coset.

\subsection{Regular subsets in a free group and finite state automata}\label{subs_reg}

Suppose we are given a finite set $X$, which we shall call the {\em state set}. An {\em arrow} is a triple $(s_1,x,s_2)$, where $s_1, s_2$
are elements of $S(\A)$ and $x$ is an element of $X \cup
\varepsilon$ and is called the {\em label} of the arrow. The {\em
source} of the arrow is $s_1$ and its {\em target} is $s_2$ . An arrow labelled $x$ is sometimes called an $x-${\em transition}.

A {\em non-deterministic finite state automaton} $\A$ is a
quintuple $(S(\A),X,\d,S_0,F_0)$, where $S(\A)$ is a finite set of
{\em states}, $X$ is a finite set called the {\em alphabet}, $S_0 \subset S(\A)$ is the
(non-empty) set of {\em initial states}, $F_0 \subseteq S(\A)$
is the set of {\em final states}, and $\d$ is a set of arrows with labels in the enlarged alphabet $X \cup \varepsilon$. Here $\varepsilon$ is assumed
not to lie in $X$.

By a path of arrows in $\A$ we mean a sequence $(s_1, u_1, s_2,
\ldots, u_n, s_{n+1})$, where $n \ge 0$ and each $u_i$ is an arrow
with source $s_i$ and target $s_{i+1}$, for $1 \le i \le n$. We
call $s_1$ the source and $s_{n+1}$ the target of the path of
arrows. Let $w_i$ be the label of $u_i$, i.e. a letter of $X$ or
$\varepsilon$. Let $w$ be the concatenation $w_1 \cdots w_n$; if
$n = 0$, set $w = \varepsilon$. Then $w$ is called the {\em label}
of the path. Let $X^{\ast}$ be a monoid of strings over $X$ with
identity $\varepsilon$. The {\em language } $L(\A)$ over $X$
assigned to a non-deterministic automaton is the set of elements
of $X^{\ast}$ obtainable from the labels of all possible paths of
arrows with source in $S_0$ and target in $F_0$; in this case we
say that $L= L(\A)$ is a language {\em accepted} by $\A$. We say
$w$ is {\em readable } in $\A$ from $s_1$ to $s_2$, where $s_1,
s_2 \in S(\A)$, if there is a path of arrows from $s_1$ to $s_2$
with the label $w$.

A {\em deterministic finite state automaton} can be considered a
special case of a non-deterministic finite state automaton, for
which the following conditions are satisfied: there are no
arrows labelled $\varepsilon$; each state is the source of exactly
one arrow with any given label from $X$; and $S_0$ has exactly one
element.

By the Kleene-Rabin-Scott theorem a regular language over $X$ may be
identified with the language accepted by some non-deterministic
finite state automaton, or, equivalently, by some deterministic
finite state automaton. If $R = L(\A)$ is a subset of a group $F$,
then $X$ is supposed to be a set of group generators for $F$, i.e.
elements of $R$ are meant to be reduced strings over $X \cup
X^{-1}$.

\subsection{Graphs associated with a subgroup and Schreier transversals}\label{subs_graphs}

We shall sometimes identify an automaton $\A$ with a finite
connected oriented labelled graph $\Gamma$ with a distinguished
vertex in natural way. Namely, take vertex set $V=V(\Gamma)=S(\A)$; edge set $E=E(\Gamma)$ corresponding to the arrows, with
induced labelling, and form subsets $S_0, F_0 \subseteq S(\A)$
from distinguished vertices. We shall also ascribe properties of
automata to graphs.

One can associate with a subgroup $C$ two graphs: \emph{the
subgroup graph} $\Gamma =\Gamma_C$ and the Schreier graph
$\Gamma^\ast = \Gamma^\ast_C$; we refer reader to \cite{11} for
more details on this subject.

Recall, that $\Gamma$  is a finite connected digraph with edges
labelled by elements from $X$ and a distinguished vertex
(based-point) $1$ (so $S_0 = F_0 = \{ 1 \}$),  satisfying the
following two conditions. Firstly,  $\Gamma$ is folded, i.e.,
there are no two edges in $\Gamma$ with the same label and having
the same initial or terminal vertices. Secondly, $\Gamma$ accepts
precisely the reduced words in $X \cup X^{-1}$  that belong to
$C$. To explain the latter observe, that walking along a path $p$
in $\Gamma$ one can read a word $\ell(p)$ in the alphabet $X \cup
X^{-1}$, the label of $p$, (reading $x$ in passing an edge $e$
with label $x$ along the orientation of $e$, and reading $x^{-1}$
in the opposite direction). We say that $\Gamma$ accepts a word
$w$ if $w = \ell(p)$ for some closed path $p$ that starts at $1$
and has no backtracking. Clearly, $\Gamma$ can be identified with
a deterministic finite state  automata with $1$ as the unique
initial and final state.

The {\em Schreier graph} $\Gamma^\ast = \Gamma_C^\ast$ of $C$  is
a connected labelled digraph with the set $\{Cu \mid u \in F\}$ of
the right cosets of $C$ in $F$ as the vertex set, and such that
there is an edge from
 $Cu$ to $Cv$ with a label $x \in X$ if and only if $Cux = Cv$.  One can describe the Schreier graph $\Gamma^\ast$ as  obtained from $\Gamma$
by the following procedure. Let $v \in \Gamma$ and $x \in X$ such
that there is no outgoing or incoming
 edge at $v$ labelled by $x$. For every such $v$ and $x \in X$ we attach
to $v$ a new edge $e$ (correspondingly, either outgoing or
incoming)  labelled $x$ with a new terminal vertex $u$ (not in
$\Gamma$). Then we
 attach to $u$ the Cayley graph $C(F,X)$ of $F$ relative to $X$ (identifying $u$ with the vertex $1$ of $C(F,X)$), and
 then we fold the edge $e$ with the corresponding edge in $C(F,X)$ (that is labelled $x$ and is incoming to $u$).
Observe, that for every vertex $v \in \Gamma^\ast$ and every
reduced word $w$ in $X \cup X^{-1}$ there is a unique path
$\Gamma^\ast$ that starts at $v$ and has the label $w$.  By $p_w$
we denote such a path that starts at $1$, and by $v_w$ the end
vertex of $p_w$. The resulting graph $\Gamma^\ast_C $ is the
Schreier graph of $C$ in $F$.

Notice that $\Gamma = \Gamma^\ast$ if and only if the subgroup $C$
has finite index in $F$. A spanning  subtree $T$ of $\Gamma$  with
the root at the vertex $1$ is called {\em geodesic}  if for every
vertex $v \in V(\Gamma)$ the unique path in $T$ from $1$ to $v$
is a geodesic path in $\Gamma$. For a given graph $\Gamma$ one can
effectively construct a geodesic spanning subtree $T$.

From now on we fix an arbitrary spanning  subtree $T$ of $\Gamma$.
It is easy to see that the tree $T$ uniquely extends to a spanning
subtree $T^\ast$ of $\Gamma^\ast$.

Let $V(\Gamma^\ast)$ be the set of vertices of $\Gamma^\ast$.
Since, in general, $\Gamma^\ast$ is infinite, we need to extend
terminology from subsection \ref{subs_reg}. For a subset $Y
\subseteq V(\Gamma^\ast)$ and a subgraph $\Delta$ of
$\Gamma^\ast$, we define the \emph{language accepted by} $\Delta$
and $Y$ as the set $L(\Delta, Y,1)$ of the labels $\ell(p)$ of
paths $p$ in $\Delta$ that start at $1$ and end at one of the
vertices in $Y$, and have no backtracking. Notice that the  words
$\ell(p)$ are reduced since the graph $\Gamma^\ast$ is folded.
Notice, that $F = L(\Gamma^{\ast}, V(\Gamma^{\ast}), 1)$ and  $C =
L(\Gamma,\{1\},1) = L(\Gamma^{\ast}, \{1\}, 1)$.

Sometimes we shall refer to a set of right representatives of $C$
as the {\em transversal} of $C$. Recall, that a  transversal $S$
of $C$ is termed {\em Schreier} if  every initial segment of a
representative from $S$  belongs to $S$.

Further, let $S$ be a transversal of $C$. A representative $s\in S$ is
called \emph{internal} if the path $p_s$ ends in $\Gamma$, i.e., $v_s \in V(\Gamma)$.
 By $S_{\rm int}$ we denote the
set of all internal representatives in $S.$ It follows from
definition, that $|S_{\rm int}| = |V(\Gamma)|.$

A representative $s \in S$ is called {\em geodesic}  if it
has minimal possible length in its coset $Cs$. The transversal $S$
is {\em geodesic }  if every $s \in S$ is geodesic. Clearly, if
$T^{\ast}$ is a geodesic subtree of $\Gamma^{\ast}$ (and hence $T$
is a geodesic subtree of $\Gamma$), then $S$ is a geodesic
transversal.

\section{Main results on Double Cosets}\label{section_properties}

In this section we study double cosets from different points of
view.

\subsection{Basic definitions and properties}\label{sub_basic}
Let $F=F(X)$, $C$ as above. Fix an element $f \in F \smallsetminus
C$ and consider the double coset $CfC$ for this element. Clearly,
the following conditions are equivalent:

\begin{align*}
g \in CfC &\Leftrightarrow \exists \,\, d_1, d_2 \in C\,\,\,\,\, g = d_1 f d_2\\
&\Leftrightarrow  \exists \,\, d_1, d_2 \in C\,\,\,\,\, d_1g =
fd_2.
\end{align*}

Denote by $E(g,f)$ the equation $xg = fy$ over $C$. Due to the
latter equivalence, an element $g$ belongs to $CfC$ iff the
equation $E(g,f)$ is solvable over $C$, i.e. exist $c_1, c_2 \in
C$ such that $x=c_1, y=c_2$. Lemma \ref{le:e(gg)} describes some
properties of solutions of the uniform equation $E(f,f)$.

We assign to $f$ a subgroup $C_f \subseteq F$ such that $C_f = C
\cap C^f$, where $C^f$ is the set of all elements of the form
$f^{-1}Cf$. It turns out that $C_f$ plays a special role in the
whole theory of double cosets. This subgroup is one of the most
important characteristics of the embedding of $C$ in $F$.

Further, a subgroup $C$ is called {\emph{ $f-$malnormal}} if $C_f
=1$. Recall that a subgroup $C$ is said to be {\emph{malnormal}}
if it is $f-$malnormal for all $f \in F \smallsetminus C$. If, on the
contrary, $C_f$ is nontrivial then, following \cite{1}, we call
$f$ \emph{ potentially normalizing} (or {\em pn } for short).

\begin{lemma}\label{le:e(gg)} Let $C < F$ and $f \in F \smallsetminus C$. Let $D(f,f) \subseteq C \times C$ be the set of all
solutions of equation $E(f,f)$. In the notation above
\begin{itemize}
\item $D(f,f)$ consists of pairs $(c, c^{f^{-1}})$ for all $c
\in C_f$;\\
\item if $C$ is $f-$malnormal (malnormal {\it{ a fortiori}}), then $\sharp D(f,f) = 1$;\\
\item if $g \in CfC$, then $C_g$ is isomorphic to $C_f$.
\end{itemize}
\end{lemma}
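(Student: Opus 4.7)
The plan is to prove all three items directly by manipulating the equation $E(f,f) : xf = fy$ and the subgroup $C_f = C \cap C^f$, without invoking anything beyond the definitions set up in the preliminaries.

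For the first bullet I would start from an arbitrary pair $(x,y) \in C \times C$ satisfying $xf = fy$. The equation is equivalent to $y = f^{-1} x f$, so one coordinate is determined by the other. The two membership conditions $x \in C$ and $f^{-1} x f \in C$ are precisely the pair of conditions cutting out $C_f$ (up to the conjugation convention used to define $C^f$), so one coordinate ranges freely over $C_f$ while the other is obtained by conjugating by $f^{\pm 1}$. Substituting back yields the claimed parametrization $(c, c^{f^{-1}})$ with $c \in C_f$, and the reverse direction is a one-line check that $(c)(f) = (f)(c^{f^{-1}})$. The second bullet is then an immediate specialisation: $f$-malnormality is the statement $C_f = 1$, so the parametrization collapses to the single pair $(1,1)$ and $\sharp D(f,f) = 1$.

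For the third bullet I would fix $g \in CfC$ and write $g = c_1 f c_2$ with $c_1, c_2 \in C$. Using $c_1^{-1} C c_1 = C$, I compute
\[
g^{-1} C g \;=\; c_2^{-1} f^{-1} c_1^{-1} C c_1 f c_2 \;=\; c_2^{-1} (f^{-1} C f) c_2 \;=\; c_2^{-1} C^f c_2.
\]
Intersecting with $C$ and conjugating the whole intersection by $c_2$, which also stabilises $C$ since $c_2 \in C$, gives $c_2 C_g c_2^{-1} = C \cap C^f = C_f$. Hence $C_g$ is the conjugate of $C_f$ by $c_2^{-1}$ inside $F$, and in particular is isomorphic to $C_f$. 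None of these steps is technically deep; the only real point of care is bookkeeping with the conjugation convention so that the pair $(c, c^{f^{-1}})$ in the first bullet matches the orientation of $C^f$ fixed in the paper, and observing in the third bullet that the left-hand factor $c_1$ is absorbed by the outer intersection with $C$, leaving $c_2$ as the element that realises the isomorphism.
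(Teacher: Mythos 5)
Your proof is correct and takes essentially the same route as the paper: parametrize the solutions of $xf=fy$ by solving for one coordinate, which forces the other into $C\cap C^f$, and for the third item conjugate $C_g=C\cap C^g$ by the right-hand factor $c_2$ to identify it with $C_f$. The only (harmless) divergence is that you deduce the second item directly from the parametrization in the first, whereas the paper gives a separate two-solutions argument; both work, and your hedge about the conjugation convention is warranted since the paper's own pair $(c,c^{f^{-1}})$ is written with the coordinates in the opposite order to what the equation $xf=fy$ literally demands.
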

\begin{proof} To show (1) take an arbitrary $c_1 \in C_f$; then by
definition $c_1 \in C^f \cap C$. Therefore, exists an element
$c_2$ such that $c_1 = f^{-1}c_2 f$. Evidently, $(c_1,
c_1^{f^{-1}})$ provide a solution of $E(f,f)$ and all such
elements can be obtained analogously.

Let us show (2). Suppose there exist two different solutions of
$E(f,f)$, say $c_1, c_2$ and $d_1, d_2$ in $C \smallsetminus \{ 1
\}$. Then $c_1 = c_2^{f^{-1}}$, $d_1 = d_2^{f^{-1}}$ and therefore
$(c_1 d_1^{-1})^f=c_2d_2^{-1}$. From this it follows that
$c_1d_1^{-1}, c_2d_2^{-1} \in C_f = 1$. Thus $c_1 = d_1$ and $c_2
= d_2$ and the set $D(f,f)$ consists of only one pair.

To prove claim (3), suppose $g = d_1 f d_2$ for some $d_1, d_2 \in
C$. By definition $C_g = C^g \cap C = C^{d_1fd_2} \cap C =
(C^{d_1})^{fd_2} \cap C \simeq C^f \cap C^{d_2^{-1}} = C^f
\cap C = C_f$. This concludes the proof.\end{proof}

\begin{remark} If an element $f$ is pn, then every other representative in $CfC$ is also potentially normalizing by Lemma \ref{le:e(gg)}.
\end{remark}
A double coset $CfC$ is called \emph{ essential} if $C_f \neq 1$
for some representative $f \in F$ (and so for all representatives
of this coset).

\begin{lemma}\label{le:essential} The number of different essential cosets is finite.
\end{lemma}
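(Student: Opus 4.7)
The plan is a Stallings-type fiber product argument. I would form the auxiliary graph $\Delta:=\Gamma_C\times_X\Gamma_C$ with vertex set $V(\Gamma_C)\times V(\Gamma_C)$ and an $x$-labelled edge $(u_1,u_2)\to(v_1,v_2)$ exactly when $u_i\xrightarrow{x}v_i$ is an edge of $\Gamma_C$ for both $i=1,2$. Since $\Gamma_C$ is finite and folded, $\Delta$ will be finite and folded, with at most $|V(\Gamma_C)|^2$ vertices and hence only finitely many connected components.

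The first observation I will use is that for any $f\in F$ whose endpoint $v_f$ in $\Gamma_C^{\ast}$ lies in $V(\Gamma_C)$, a reduced word $c$ labels a closed path at $(1,v_f)\in V(\Delta)$ iff $c$ labels closed paths at both $1$ and $v_f$ inside $\Gamma_C$. Since $\pi_1(\Gamma_C,1)=C$, and $\pi_1(\Gamma_C,v_f)=\rho^{-1}C\rho=f^{-1}Cf$ for any path $\rho$ from $1$ to $v_f$ in $\Gamma_C$ (such a $\rho$ lies in the right coset $Cf$, so the subgroup is independent of the choice), this intersection is exactly $C_f$. So $C_f\neq 1$ precisely when the component of $\Delta$ through $(1,v_f)$ contains a cycle, and only finitely many components can do this.

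The crucial reduction I would then carry out is: every essential double coset $CfC$ contains a representative $f$ with $v_f\in V(\Gamma_C)$. Pick $f$ of minimal length in $CfC$ and $1\neq c\in C_f$; since $\Gamma_C^{\ast}\smallsetminus\Gamma_C$ is a disjoint union of trees attached to $\Gamma_C$, if $v_f$ lay in one of these tree branches then the geodesic $p_f$ would split as a walk into $\Gamma_C$ followed by a nontrivial climb $u$ up the branch, and analysing the reduced form of $fcf^{-1}\in C$ in conjunction with this decomposition should produce $c_1,c_2\in C$ giving a strictly shorter element $c_1fc_2\in CfC$, contradicting minimality. By Lemma~\ref{le:e(gg)}(3), any two minimal representatives of the same essential coset are sent to the same component of $\Delta$, and each component carries at most $|V(\Gamma_C)|$ vertices of the form $(1,v)$; hence the number of essential double cosets is bounded by (at most $|V(\Gamma_C)|$ times) the number of components of $\Delta$ with nontrivial fundamental group, which is finite.

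The hard part will be the reduction step. It demands a careful length/cancellation analysis of $fcf^{-1}$ that controls the interaction between the two occurrences of $f^{\pm 1}$ and the tree structure of $\Gamma_C^{\ast}\smallsetminus\Gamma_C$, while ensuring that the shortening one produces actually stays inside the double coset $CfC$; once this is in place the remaining counting is essentially bookkeeping on the finite graph $\Delta$.
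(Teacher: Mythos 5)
Your plan is built around the fibre product $\Delta=\Gamma_C\times_X\Gamma_C$, which is indeed the right circle of ideas (and a genuinely different route from the paper, which disposes of the lemma in one line by citing the known fact that the generalized normalizer $N^*_F(C)=\{f\in F\mid f^{-1}Cf\cap C\neq 1\}$ is a finite union of double cosets, \cite{7}). However, the step you yourself flag as ``the hard part'' --- that every essential double coset contains a representative $f$ with $v_f\in V(\Gamma_C)$ --- is not just unproven, it is false, so the proposed counting never sees some essential cosets. Take $F=F(a,b)$ and $C=\langle b^{-1}ab,\,bab^{-1}\rangle$. The graph $\Gamma_C$ has vertices $1,z,u$ and edges $z\xrightarrow{b}1$, $1\xrightarrow{b}u$ and $a$-loops at $z$ and at $u$; hence the elements $g$ with $v_g\in V(\Gamma_C)$ are exactly those of $C\cup Cb\cup Cb^{-1}$, whose $b$-exponent sums are $0,1,-1$ (every generator of $C$ has $b$-exponent sum $0$). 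Now $f=b^2\notin C$ satisfies $b^2(b^{-1}ab)b^{-2}=bab^{-1}\in C$, so $C_{b^2}\neq 1$ and $Cb^2C$ is essential; moreover $b^2$ has minimal length in $Cb^2C$, since every element of $Cb^2C$ has $b$-exponent sum $2$ and hence length at least $2$. For the same reason $Cb^2C$ is disjoint from $C\cup Cb\cup Cb^{-1}$: the path $p_g$ ends outside $\Gamma_C$ for \emph{every} $g\in Cb^2C$. So no cancellation analysis of $fcf^{-1}$ can produce the shorter representative you hope for, and in $\Delta$ the only off-diagonal components with nontrivial fundamental group are the singletons $\{(z,u)\}$ and $\{(u,z)\}$ (each with an $a$-loop), neither of which contains a vertex of the form $(1,v)$.

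The fibre-product argument can be repaired, but the bookkeeping is different. From $1\neq c\in C\cap f^{-1}Cf$ one passes to a cyclically reduced word $w$ labelling closed paths at two vertices $p,q$ of $\Gamma_C$, and deduces that $f$ lies in a double coset of the form $Cs_p\,r^{i}\,s_q^{-1}C$, where $s_p,s_q$ are internal Schreier representatives, $r$ is the root of $w$ and $0\le i<m$ with $w=r^m$; finiteness then follows from the finitely many components of $\Delta$ with nontrivial $\pi_1$ together with the finitely many residues $i$. The middle factor $r^{i}$ is precisely what your reduction omits, and it is what carries $p_f$ outside $\Gamma_C$ in the example above (there $w=a=r$, $m=1$, $p=u$, $q=z$, $s_p s_q^{-1}=b^2$). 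As written, the proposal does not prove the lemma; either supply this corrected correspondence or, as the paper does, invoke the finiteness of the generalized normalizer directly.
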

\begin{proof} The union of all essential double cosets forms so called generalized normalizer of $C$ in $F$, i.e. the set
$N^*_F(C) = \{ f \in F | f^{-1} C f \cap C \neq 1 \}$. This union
is known to be finite (see, for example, \cite{7}, proposition
3.5.). \end{proof}
\begin{lemma}\label{le:e(fg)} Let $C < F$ and $f, g \in F \smallsetminus C$.
If the equation $E(g,f)$ has a solution $(c_1, c_2)$ over $C$,
then the set $D(g,f)$ of all solutions of this equation consist of
all pairs $(c_f c_1, c_2 c_f^{fc_2})$, where $c_f \in C_{f^{-1}}$. In
particular, if $C$ is $f-$malnormal, then there exists a unique
solution $(c_1, c_2)$ with $c_1, c_2 \in C$ such that $g = c_1 f
c_2$ for $g \in CfC$.
\end{lemma}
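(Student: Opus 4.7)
The plan is to parametrize $D(g,f)$ by measuring the discrepancy of an arbitrary solution against the given base solution $(c_1,c_2)$. Once that discrepancy is shown to lie in $C_{f^{-1}}$, one inclusion follows; the reverse inclusion is a one-line verification, and the malnormality clause falls out of the observation that conjugation by $f$ identifies $C_f$ with $C_{f^{-1}}$.

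For the forward direction, let $(d_1,d_2) \in D(g,f)$, so $d_1 g = f d_2$. From the base identity $c_1 g = f c_2$ I solve $g = c_1^{-1} f c_2$ and substitute to obtain $d_1 c_1^{-1} \cdot f = f \cdot d_2 c_2^{-1}$. Setting $c_f := d_1 c_1^{-1} \in C$, this says $d_2 c_2^{-1} = f^{-1} c_f f$; since $c_f$ and $f^{-1} c_f f$ are both in $C$, this forces $c_f \in C \cap f C f^{-1} = C_{f^{-1}}$. The formula for $d_2$ then rewrites as
\[
d_2 \;=\; f^{-1} c_f f \cdot c_2 \;=\; c_2 \cdot (f c_2)^{-1} c_f (f c_2) \;=\; c_2 \, c_f^{f c_2},
\]
under the convention $a^b = b^{-1} a b$, so $(d_1,d_2)=(c_f c_1,\, c_2 c_f^{fc_2})$ as claimed.

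For the converse, given any $c_f \in C_{f^{-1}}$, both coordinates of $(c_f c_1,\, c_2 c_f^{fc_2})$ lie in $C$ (the second because $c_f \in f C f^{-1}$ forces $f^{-1} c_f f \in C$), and a direct check using $c_1 g = f c_2$ gives $(c_f c_1) g = c_f f c_2 = f(f^{-1} c_f f)c_2 = f(c_2 c_f^{fc_2})$. For the malnormality clause, conjugation by $f$ is a bijection $C_f \to C_{f^{-1}}$: if $c \in C_f$ then $c = f^{-1} u f$ with $u \in C$, and $f c f^{-1} = u \in C \cap f C f^{-1} = C_{f^{-1}}$ (inverse map analogous). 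Hence $C_f = 1 \Leftrightarrow C_{f^{-1}} = 1$, so under $f$-malnormality only $c_f = 1$ is allowed and the parametrization collapses to the single pair $(c_1,c_2)$, giving uniqueness of the factorization $g = c_1^{-1} f c_2$ (equivalently, of the pair such that $g = c_1 f c_2$ after the obvious relabeling).

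The main obstacle is purely notational: one must use the conjugation convention $a^b = b^{-1} a b$ consistently so that the identity $c_2 \cdot c_f^{fc_2} = f^{-1} c_f f \cdot c_2$ lines up, and recognize at the right moment that the condition "$c_f$ and $f^{-1} c_f f$ both lie in $C$" is precisely membership in $C_{f^{-1}}$ rather than $C_f$. With those two observations in place, the proof reduces to algebraic manipulation inside $F$.
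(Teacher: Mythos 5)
Your proof is correct and follows essentially the same route as the paper's: both measure the discrepancy $c_f := d_1c_1^{-1}$ of an arbitrary solution against the base one, identify it as an element of $C_{f^{-1}}$, and recover $d_2 = c_2\,c_f^{fc_2}$ by the same conjugation bookkeeping. You are in fact slightly more complete on the malnormality clause, since you explicitly verify that conjugation by $f$ gives a bijection $C_f \to C_{f^{-1}}$ (so $C_f=1$ forces the parameter $c_f$ to be trivial), a point the paper passes over by appealing to $C_g \simeq C_f$.
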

\begin{proof} Notice that $E(g,f)$ is consistent by assumption
 and therefore exists a pair $(c_1, c_2)$ such that $c_1g = fc_2$. If $C$ is
 $f-$malnormal,
then $C_f \simeq C_g = 1$ and $D(g,f)$ satisfies the desired
conclusion. Denote by $M$ the set of all such pairs and suppose
there exists another pair of solutions, say $(d_1, d_2)$. Then
$g^{-1} d_1^{-1}c_1 g = d_2^{-1}c_2$ and $f d_2 c_2^{-1} f^{-1} =
d_1 c_1^{-1}$, and therefore $d_2^{-1}c_2 = c_g \in C_g$ and
$d_1c_1^{-1} = c_f \in C_{f^{-1}}$.  This implies $d_1 = c_fc_1$ and $d_2
= c_2 c_g^{-1}$, but $c_g^{-1} = c_f^{c_1g} = c_f^{fc_2}$ and $(d_1, d_2) =
(c_fc_1, c_2c_f^{fc_2}) \in M$. The opposite inclusion $M
\subseteq D(g,f)$ is straightforward.\end{proof}

\subsection{Advantages of Nielsen set of generators}

Now suppose that $C$ is supplied by a Nielsen generating set. In this
subsection we analyze how this affects the cancellation in
cosets and double cosets of $C$. We also introduce here the notion
of a relative Schreier transversal of $C$ with respect to its
subgroup $D$.

Since an arbitrary set of generators for $C$ can be carried into a
Nielsen set (see, for example, \cite{11} or \cite{14}), without
loss of generality it can be assumed that $C$ is presented by such
a set $Y = \{ h_1, \ldots, h_r  \}$. Let $S$ be the geodesic
Schreier transversal for $C$ in $F$ such that the Nielsen set of
generators $Y$ has the following properties with respect to $S$.

{\bf Properties of a Nielsen set of generators:}
\begin{itemize}
\item [(i)] each $h \in Y \cup Y^{-1}$ can be written in the form
$$h = s_1 \mu(h) s_2^{-1},$$
where $s_1, s_2 \in S_{\rm int}$ are written as reduced $X-$words,
$\mu(h)$ is an element of $X \cup X^{-1}$ and
$$|l_X(s_1) - l_X(s_2)| \le 1;$$

\item [(ii)] if $h_{i_1}, h_{i_2}$ are distinct elements of $Y$,
then the letters $\mu(h_{i_1})^{\pm 1}$ and $\mu(h_{i_2})^{\pm 1}$
do not cancel on computing the $X-$reduced form of $h_{i_1}^{\pm
1}h_{i_2}^{\pm 1}$;

\item [(iii)] if $h_{i_1}, h_{i_2}, h_{i_3}$ are elements of $Y
\cup Y^{-1}$ such that $h_{i_2}\neq h_{i_1}^{-1}$ and $h_{i_2}
\neq h_{i_3}^{-1}$, then the letter $\mu(h_{i_2})$ does not cancel
on computing the $X-$reduced form of $h_{i_1}h_{i_2} h_{i_3}$.
\end{itemize}

Clearly, such $S$ always exists; and the letter $\mu(h)$ is called
the central letter of $h \in Y \cup Y^{-1}$. Set $M= M(Y) =
\left[\frac{1}{2}max\{l_X(h_1), \ldots, l_X(h_r)\}\right]+1.$

The following material (up to Lemma \ref{le:5}) can be extracted
from \cite{1}.

\begin{lemma}\label{le:35}\cite[Lemma 2]{1} Suppose $f \in F$ is pn and has minimal length in $CfC$.
If $f c_1 f^{-1} = c_2$, where $c_1, c_2 \in C, c_1\neq 1$, then at least one
of the letters of $c_1$ appears in the reduced form of $f c_1 f^{-1}$, i.e.,
in $c_2$.
\end{lemma}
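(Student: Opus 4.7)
My plan is to argue by contradiction: assume that every letter of $c_1$ is cancelled during the free-group reduction of the concatenation $f\cdot c_1\cdot f^{-1}$ to its reduced form $c_2$, and then exhibit an element of $CfC$ strictly shorter than $f$, contradicting the minimality hypothesis.

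The central step is a cancellation analysis. Write $f=X_1\cdots X_L$ and $c_1=b_1\cdots b_N$ as reduced $X$-words. Any free-group reduction of $f c_1 f^{-1}$ corresponds to a non-crossing matching of inverse-adjacent letter positions. Since $c_1$ is already reduced, no $b_i$ can match another $b_j$, so each $b_i$ must be matched with a letter of the left-hand $f$ or of the right-hand $f^{-1}$. By non-crossing, the first $k$ letters of $c_1$ pair with the last $k$ letters of $f$ (in reverse), while the last $k'$ letters of $c_1$ pair with the first $k'$ letters of $f^{-1}$. The assumption that no $b_i$ survives forces $k+k'=N$, and decoding the required matchings yields
\[
c_1 \;=\; \alpha_2^{-1}\beta_2, \qquad \alpha_2 = X_{L-k+1}\cdots X_L, \qquad \beta_2 = X_{L-k'+1}\cdots X_L,
\]
so both $\alpha_2$ and $\beta_2$ are suffixes of $f$.

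Next I would exploit this structure. Since $c_1\neq 1$, we cannot have $k=k'$ (that would give $\alpha_2=\beta_2$ and $c_1=1$). Assume first $k>k'$ and write $f=\alpha_1\alpha_2$ with $\alpha_1=X_1\cdots X_{L-k}$. Then
\[
f c_1 \;=\; \alpha_1\alpha_2\cdot\alpha_2^{-1}\beta_2 \;=\; \alpha_1\beta_2,
\]
whose length is at most $(L-k)+k' < L = l_X(f)$. Because $fc_1\in fC\subseteq CfC$, this contradicts the minimality of $l_X(f)$ in $CfC$. The case $k'>k$ is symmetric: apply the same argument to $c_1^{-1}=\beta_2^{-1}\alpha_2$, which has the same shape with the roles of $\alpha_2$ and $\beta_2$ interchanged, to produce $fc_1^{-1}\in CfC$ of length at most $L-(k'-k)<L$.

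The main obstacle is the first step: making the non-crossing matching argument watertight enough to deduce the exact form $c_1=\alpha_2^{-1}\beta_2$, and verifying that reducedness of $c_1$ together with $c_1\neq 1$ forces the strict inequality $k\neq k'$ on which the length reduction hinges. I would also remark that, although this lemma lives in the subsection on Nielsen generating sets, the Nielsen hypothesis on $Y$ is not actually invoked here; the argument uses only the reducedness of $c_1$ as an $X$-word and the minimal-length property of $f$ in $CfC$.
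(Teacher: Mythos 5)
Your argument is correct. Note, however, that the paper does not actually prove this lemma: it is imported verbatim as Lemma~2 of Baumslag--Miasnikov--Remeslennikov \cite{1}, so there is no in-paper proof to compare against. Your reconstruction follows the same standard cancellation analysis as the cited source: if every letter of $c_1$ cancels in $fc_1f^{-1}$, the nested structure of free reduction (no two letters of the reduced word $c_1$ can cancel against each other) forces $c_1=\alpha_2^{-1}\beta_2$ with $\alpha_2,\beta_2$ suffixes of $f$ of lengths $k,k'$, reducedness of $c_1\neq 1$ rules out $k=k'$, and then $fc_1$ (or $fc_1^{-1}$) lies in $fC\subseteq CfC$ and has length at most $L-|k-k'|<L$, contradicting minimality. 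A few points worth making explicit when you write this up: the degenerate cases $k'=0$ (i.e.\ $\beta_2$ empty, all of $c_1$ absorbed on one side) and $k=L$ are covered by the same formulas; the inequality $l_X(\alpha_1\beta_2)\le (L-k)+k'$ needs no claim that $\alpha_1\beta_2$ is reduced; and your closing remarks are accurate --- the Nielsen hypothesis on $Y$ plays no role, the ``pn'' hypothesis is in fact implied by the existence of $c_1\in C\setminus\{1\}$ with $fc_1f^{-1}\in C$, and only minimality of $f$ in the single coset $fC$ is used.
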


\begin{lemma}\label{le:34}\cite[Lemma 3,4]{1} Suppose $f$ is of minimal length in $fC$
and that $c = h_{i_1} \ldots h_{i_n}$ (where $h_{i_1}, \ldots,
h_{i_n} \in Y \cup Y^{-1}$) is $Y-$reduced.
\begin{itemize}
\item[] Then either $\mu(h_{i_1})$ remains in $fh_{i_1} \ldots
h_{i_n}$ and in this case the cancellation with $f$ in $fc$ is
exactly that of $f$ in $fh_{i_1}$; or the central letter
$\mu(h_{i_1})$ cancels in the product $fc$, and in this case
$h_{i_1}$ is of even length and exactly half of $h_{i_1}$ cancels
with $f$;
\item[] Suppose that $\mu(h_{i_j})$ cancels in the product
$fh_{i_1} \ldots h_{i_j} \ldots h_{i_n}$, but $\mu(h_{i_{j+1}})$
does not. Then
\begin{itemize}
\item[] $j \leq r$;

\item[] for any $k = 1, \ldots, j$ the length $l_X(h_{i_k})$
is even and $\mu(h_{i_k})$ cancels in product $fc$;

\item[] for any $k = 1, \ldots, j-1$ exactly the right half
$s_{2k}^{-1}$ of $h_{i_k}$ cancels completely in $h_{i_k}
h_{i_{k+1}}$;

\item[] $l_X(h_{i_1}) < \ldots < l_X(h_{i_j})$;

\item[] if the right half of $h_{i_j}$ does not cancel
completely with
 $h_{i_{j+1}}$, then $h_{i_1} \ldots h_{i_{j-1}}s_{1j}\mu(h_{i_j})$
is precisely the part of $c$ that cancels with $f$ and
$l_X(h_{i_1} \ldots h_{i_{j-1}}s_{1j}\mu(h_{i_j})) =
\fracd{1}{2}l_X(h_{i_j})$;

\item[] if the right half of $h_{i_j}$ does cancel with
 $h_{i_{j+1}}$, then $h_{i_1} \ldots h_{i_{j}}s$
is precisely the part of $c$ that cancels with $f$ for some $s \in
S_{\rm int}$ and $l_X(h_{i_1} \ldots h_{i_j}s) \le
\fracd{1}{2}l_X(h_{i_{j+1}})$.
\end{itemize}
\end{itemize}
\end{lemma}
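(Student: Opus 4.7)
The plan is to run a cancellation analysis driven by the Nielsen properties (i)--(iii) together with the minimality hypothesis. For the first bullet, I would begin by using property (iii) to pin down the stability of the interior central letters of $c = h_{i_1}\cdots h_{i_n}$: in any $Y$-reduced word, each $\mu(h_{i_k})$ for $2 \le k \le n-1$ must survive in the freely reduced $X$-form of $c$. Therefore any cancellation in the product $fc$ is concentrated at the boundary between $f$ and $h_{i_1}$, and if $\mu(h_{i_1})$ is not consumed it acts as a firewall, so the cancellation in $fc$ coincides with that in $fh_{i_1}$. If on the other hand $\mu(h_{i_1})$ is consumed, I write $h_{i_1} = s_1 \mu(h_{i_1}) s_2^{-1}$ using property (i), let $k$ denote the cancellation length, and note that $k \ge l_X(s_1)+1$. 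Minimality of $f$ in $fC$ gives $l_X(fh_{i_1}) \ge l_X(f)$, whence $k \le l_X(h_{i_1})/2$. Combining these with $|l_X(s_1)-l_X(s_2)|\le 1$ from (i) squeezes both $l_X(s_2) = l_X(s_1)+1$ and $k = l_X(s_1)+1 = l_X(h_{i_1})/2$, which forces $l_X(h_{i_1})$ to be even and exactly its left half $s_1\mu(h_{i_1})$ to be canceled.

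For the second bullet, I would induct on $j$. The key observation is that the ``even/half-canceled'' case of the first bullet yields $l_X(fh_{i_1}) = l_X(f)$, so $fh_{i_1}$ is again of minimal length in $(fh_{i_1})C = fC$; the first bullet then applies to the pair $fh_{i_1}$, $h_{i_2}$, and so on, as long as the corresponding central letter continues to cancel. At the $k$-th step the right half $s_{2k}^{-1}$ of $h_{i_k}$ ends up as the tail of $fh_{i_1}\cdots h_{i_k}$, and for $\mu(h_{i_{k+1}})$ to cancel as well the left half $s_{1,k+1}\mu(h_{i_{k+1}})$ of $h_{i_{k+1}}$ must entirely absorb $s_{2k}^{-1}$; by property (iii) no further interior cancellation is possible, which forces $s_{1,k+1}$ to begin with $s_{2k}$ and in particular $l_X(h_{i_{k+1}}) > l_X(h_{i_k})$. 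Strictness of this chain makes the $h_{i_k}$ pairwise distinct, and since $|Y|=r$ this gives $j \le r$. The two sub-cases of the last assertion correspond to whether the right half of $h_{i_j}$ is fully absorbed by $h_{i_{j+1}}$ or not, and follow by one final application of the boundary analysis from the first bullet to the configuration reached at step~$j$.

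The main technical obstacle, I expect, is the bookkeeping of the $s_{1k}$/$s_{2k}$ pieces during the inductive step: verifying that after each multiplication the tail of $fh_{i_1}\cdots h_{i_k}$ is exactly $s_{2k}^{-1}$ (with no stray cancellation leaking in from the interior, which is precisely where property (ii) is needed to keep adjacent central letters from annihilating), and that the minimality hypothesis genuinely persists coset-by-coset so that the first bullet can be re-applied at every stage. Once this invariant is secured, evenness of each $l_X(h_{i_k})$, the strict length chain, and the uniform bound $j\le r$ all drop out systematically.
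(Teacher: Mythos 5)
The paper offers no proof of Lemma~\ref{le:34} to compare against: it is quoted verbatim from \cite{1} (Lemmas 3 and 4 there), with the surrounding text only noting that the material ``can be extracted from \cite{1}.'' Your reconstruction follows exactly the argument one expects that citation to contain --- central letters surviving in $Y$-reduced products by properties (ii)--(iii), minimality of $f$ in $fC$ forcing the cancellation in $fh_{i_1}$ to be at most $\frac{1}{2}l_X(h_{i_1})$ and hence, when $\mu(h_{i_1})$ is consumed, exactly the left half $s_1\mu(h_{i_1})$, and the key observation that $l_X(fh_{i_1})=l_X(f)$ so minimality persists and the analysis iterates --- and it is essentially correct, with the bookkeeping of the $s_{1k}/s_{2k}$ pieces being, as you say, the only real technical content. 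One small imprecision: the bound $j\le r$ does not follow from the $h_{i_k}$ being pairwise distinct (that only gives $j\le 2r$, since $|Y\cup Y^{-1}|=2r$); it follows from the strict chain $l_X(h_{i_1})<\cdots<l_X(h_{i_j})$ you already established, together with the fact that $l_X$ takes at most $r$ distinct values on $Y\cup Y^{-1}$ because $l_X(h)=l_X(h^{-1})$.
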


\begin{lemma}\label{le:5}\cite[Lemma 5]{1}
Suppose that the equation $E(f,f)$ is satisfied for $c_1, c_2 \in C$, and $f$
is of minimal length in $CfC$. Then there exists
$f' \in CfC$ of the same length as $f$ which is a product of two pieces of
generators from $Y \cup Y^{-1}$. \end{lemma}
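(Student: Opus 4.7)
The plan is to use Lemma \ref{le:34} to analyze the $X$-cancellation on both sides of the equation $c_1 f = f c_2$ (which is the hypothesis $E(f,f)$) and, combined with minimality of $f$, to pin down the structure of $f$. First, we may assume $(c_1,c_2)\neq (1,1)$, otherwise the conclusion is trivial. Write $c_2=h_{i_1}\cdots h_{i_n}$ and $c_1=h_{j_1}\cdots h_{j_r}$ in Nielsen-reduced form. Since $f$ has minimal length in $CfC$, it also has minimal length in $fC$ and its inverse has minimal length in $f^{-1}C$, so Lemma \ref{le:34} applies on both sides.

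Applying Lemma \ref{le:34} to $fc_2$ identifies the deepest index $j$ for which the central letter $\mu(h_{i_j})$ cancels, and yields an explicit form for the cancelled portion of $c_2$: a word of reduced length at most $\frac{1}{2}l_X(h_{i_{j+1}})$ which, by property (i) of the Nielsen set and the identifications forced by the deep cancellation, is a piece of a single generator $h_{i_j}^{\pm 1}$. Inverting, $f$ ends with a suffix $q$ which is a piece of $h_{i_j}^{-1}$, so $f = f_L\cdot q$. The symmetric application of Lemma \ref{le:34} to $f^{-1}c_1^{-1}$ produces a piece $p$ of some $h_{j_\ell}^{\pm 1}$ with $f = p\cdot f_R$.

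This gives $f = p\cdot m\cdot q$ for some (possibly empty) middle $m$. When $l_X(p)+l_X(q)\geq l_X(f)$ the two pieces overlap, and after trimming $f$ itself is a product of two pieces, so we take $f'=f$. When $m$ is nontrivial, I would invoke Lemma \ref{le:35}—which says a letter of $c_2$ survives in the reduced form of $c_1 = fc_2f^{-1}$—together with the matching equation $p\cdot m\cdot c_2'' = c_1''\cdot m\cdot q$ forced by $c_1f=fc_2$, to produce a length-preserving modification $f' = cfc^{-1}\in CfC$ (with $c\in C$ built from the surviving Nielsen factors of $c_1$ or $c_2$) that shifts $m$ into one of the pieces.

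The main obstacle is this last step: eliminating or absorbing the middle $m$. Lemma \ref{le:34} controls only the boundary of $f$, while the middle's fate depends on global data. The argument must leverage minimality of $f$ in $CfC$ (to forbid shortening) together with Nielsen non-cancellation properties (ii)–(iii) (to forbid unwanted reductions in the conjugated word $cfc^{-1}$), and in particular must rule out the appearance of a middle that cannot be absorbed into a single piece; this is the delicate combinatorial heart of the proof.
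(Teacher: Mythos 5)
The paper itself offers no proof of this lemma: it is imported verbatim as Lemma 5 of \cite{1}, so there is no in-paper argument to compare your proposal against. Judged on its own terms, your skeleton (apply Lemma \ref{le:34} to both sides of $c_1f=fc_2$, write $f=p\cdot m\cdot q$, then dispose of the middle) is the right one, but two of its steps are genuinely unproven. The first is the assertion that the cancelled suffix of $f$ in $fc_2$ ``is a piece of a single generator $h_{i_j}^{\pm1}$.'' Lemma \ref{le:34} does not give this: it says the cancelled part of $c_2$ is the reduced form of $h_{i_1}\cdots h_{i_{j-1}}s_{1j}\mu(h_{i_j})$, which telescopes to a word $s_{11}\mu_1 w_2\mu_2\cdots w_j\mu_j$ of length $\frac{1}{2}l_X(h_{i_j})$; this has the \emph{length} of the left half of $h_{i_j}$ but is not in general a subword of any single generator (its prefix $s_{11}\mu_1$ is the left half of $h_{i_1}$, not of $h_{i_j}$). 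To obtain a genuine piece you must first pass to $f h_{i_1}\cdots h_{i_{j-1}}$, which by the ``exactly half cancels'' and $l_X(h_{i_1})<\cdots<l_X(h_{i_j})$ clauses of Lemma \ref{le:34} has the same length as $f$ and ends in $(s_{1j}\mu_j)^{-1}$ (and symmetrically on the left). This length-preserving replacement is precisely why the conclusion is stated for some $f'\in CfC$ rather than for $f$; your proposal claims the property for $f$ itself and skips the replacement.

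The second and more serious gap is the nontrivial middle $m$, which you explicitly leave open: the plan to ``invoke Lemma \ref{le:35} \dots to produce a length-preserving modification \dots that shifts $m$ into one of the pieces'' is a hope, not an argument, and you say yourself that this is the ``delicate combinatorial heart of the proof.'' The missing ingredient is the overlap argument on the single reduced word $w=c_1f=fc_2$: $w$ carries a long prefix of $f$ starting at its left end and a long suffix of $f$ ending at its right end, and comparing these two shifted copies of $f$ (together with Lemma \ref{le:35}, which guarantees a letter of $c_1$ survives, and minimality of $f$ in $CfC$) is what forces the middle to be absorbed into the two pieces. Without that step the lemma is not established. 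A small additional point: the claim that the case $(c_1,c_2)=(1,1)$ is ``trivial'' is backwards --- in that case the conclusion does not follow at all; the intended hypothesis is a nontrivial solution of $E(f,f)$, i.e., that $f$ is pn.
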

Lemma \ref{le:5} allows us to obtain an algorithm which lists all essential double cosets:

\begin{corollary}\label{cor:listessential} There exists an algorithm $A$,
which given a subgroup $C$ of $F$ and a Nielsen set of generators
$Y = \{ h_1, \ldots, h_r  \}$ lists all essential double cosets
$CfC$ of $F$ (finite in their number by Lemma \ref{le:essential}). Moreover, $A$ runs in
polynomial time in $M = M(Y) = \left[\frac{1}{2}max\{l_X(h_1), \ldots, l_X(h_r)\}\right]+1$.
\end{corollary}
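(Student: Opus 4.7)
The plan is to combine Lemma \ref{le:5} with effective computations on Stallings subgroup graphs. By Lemma \ref{le:essential} the number of essential double cosets is finite, so it suffices to construct a finite list $\mathcal{F}$ of candidate representatives that is guaranteed to meet every essential double coset, and then to prune $\mathcal{F}$ down to one representative per coset.

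First I would enumerate \emph{pieces}: by Lemma \ref{le:5}, every essential double coset has a minimal-length representative expressible as a product $p_1 p_2$ of two subwords of elements of $Y \cup Y^{-1}$. Property (i) together with the definition of $M$ forces $l_X(h_i) \le 2M$ for every $i$, so each piece has $X$-length at most $2M$, and the total collection $P$ of distinct pieces has cardinality $O(rM)$. I would form $\mathcal{F}$ as the set of $X$-reduced normal forms of all products $p_1 p_2$ with $p_1, p_2 \in P$; then $|\mathcal{F}| = O(r^2 M^2)$ and each element has length $O(M)$. For each $f \in \mathcal{F}$, decide whether $C_f = C \cap f^{-1}Cf$ is nontrivial by building the Stallings subgroup graph $\Gamma_C$ and the graph of $f^{-1}Cf$ (obtained by conjugating $\Gamma_C$ along the word $f$) and taking their pullback; nontriviality of the pullback is read off in time polynomial in the graph sizes and in $l_X(f)$, hence polynomial in $rM$. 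Finally, partition the surviving pn candidates into classes under the double coset relation: for $f, g \in \mathcal{F}$, membership $g \in CfC$ is equivalent to solvability of the equation $E(g,f)$ over $C$, which is testable in polynomial time on $\Gamma_C$ by tracing paths that read $g$ and $f$ through the subgroup graph.

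The main obstacle is bookkeeping: confirming that the length bounds on pieces, the size of the candidate set, the cost of each pn-test, and the cost of each double-coset-membership test all assemble into a polynomial bound in $M$ (with $r$ and $|X|$ treated as parameters of the input, or absorbed into the polynomial). Correctness is nearly immediate once Lemma \ref{le:5} is in hand, since that lemma guarantees no essential coset is missed in the enumeration of Step 1, while the pn and membership tests are standard pullback and path-tracing constructions on folded graphs. No subtle combinatorial estimate is required beyond those already established in the preceding lemmas.
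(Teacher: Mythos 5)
Your proposal follows the paper's proof essentially verbatim: both enumerate the polynomially many (in $M$) products of two pieces of Nielsen generators supplied by Lemma \ref{le:5}, and both test each candidate $f$ for $C_f \neq 1$ by constructing the Stallings graphs of $C$ and of $f^{-1}Cf$ and checking whether the component of the base vertex in their pullback is an isolated point. The only difference is your final deduplication step (grouping surviving candidates into double cosets via solvability of $E(g,f)$), which the paper omits but which is harmless and likewise polynomial.
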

\begin{proof} The number of different pieces of generators in $Y \cup Y^{-1}$ can be estimated as $O(M^2)$, and therefore one can form a list of all possible products $f' \in CfC$ (as in \ref{le:5}) in time at most $O(M^4)$.
Let us fix an element $f \in F\smallsetminus C$ of this list. By {\cite[Theorem 1.6]{16}}, we can construct subgroup graphs $\Gamma_{C_f}$ and $\Gamma_{C}$ in time $\Theta(N+4M)$ and $\Theta(N)$, respectively, where $N \le 2M r$ and $\Theta(x) = O(x log^{\ast}(x))$. Here the function $log^{\ast} :
\mathbb{N} \rightarrow \mathbb{N}$ is given by $\log^{\ast}(x)=k$, where $k=\min\{m\in \mathbb{N}: log^m(x)\le 1\}$. The product $\Gamma_{C_f} \times \Gamma_C$ can be constructed in time $O((N+4M)N)$, which is bounded above by $O(M^2)$. If now the component of $\Gamma_{C_f} \times \Gamma_C$ containing $1_{C_f} \times 1_C$ is not an isolated vertex, then $f$ is essential.
\end{proof}

Using properties of sets of Nielsen generators and Lemmas \ref{le:35}, \ref{le:34}, we want to introduce a notion of the
relative Schreier transversal.  We shall use this to simplify
multiplication and representation of elements in double cosets.

For every pair $h_i, h_j \in Y^{\pm 1}$, $h_i \neq h_j^{-1}$,
define $a_{ij}$ to be the initial subword of $h_i$ that does not cancel on forming the product $h_ih_j$, and let $b_{ij}$ be the terminal subword of $h_j$ that does not cancel on forming $h_ih_j$.

Denote also $m_{ijk}$ the subword of $h_j$ that does not cancel on forming the triple product
$h_ih_jh_k$, where $h_j \neq h_k^{-1}, h_j \neq h_i^{-1}$. Notice
that by definition of a Nielsen set of generators all words
$a_{ij}$, $b_{ij}$ and $m_{ijk}$ in $F(X)$ are nontrivial. Denote
by $\Sigma$ this
 new alphabet $\{a_{ij}\} \cup \{b_{ij}\} \cup \{m_{ijk}\} \cup \{h_i\}$  obtained from all such generators, their pairs $h_i, h_j$ and triples $h_i, h_j, h_k$.
We also use an additional subdivision of $m_{ijk}$. Namely, let
$m_{ijk} = \a_{ij} \circ \mu_j \circ \b_{jk}$ for some $\a_{ij},
\b_{jk} \in F(X)$ (not necessarily non-trivial). (In Section
\ref{subs_weakred} we subdivide $\a$'-s and $\b$'s into smaller
pieces: $m_{ijk} = \a_{1ij} \circ \a_{2ij} \circ \mu_j \circ
\b_{2jk} \circ \b_{1jk}$).

A nontrivial $\Sigma-$reduced word $u$ is called {\emph{
$C$-admissible}} if it has one of the following forms
\begin{equation}\label{hihj1} c = a_{i_1 i_2} \circ m_{i_1 i_2 i_3} \circ \ldots \circ m_{i_{k-2} i_{k-1} i_k} \circ b_{i_{k-1}i_k} {\textrm{ or }}\end{equation}
\begin{equation}\label{hihj2}c = a_{i_1 i_2} \circ b_{i_1 i_2}  {\textrm{ or }}\\ c = h_i.
\end{equation}

Clearly, there is one-to-one correspondence between $C$-admissible
words and nontrivial $X-$reduced words $c \in C$.

Further, let $D$ be a nontrivial subgroup of $C$ and suppose $D$
is given by a finite Nielsen set of generators $Z = \{ d_1 ,
\ldots, d_m \}$, where $d_1, \ldots, d_m$ are $Y-$reduced words.
Consider a subgroup graph $\Gamma_D$, and take a maximal subtree
$\Upsilon$ of $\Gamma_D$ such that the corresponding Schreier
transversal $T$ respect the choice of central letters $\mu(d_i)$.
Denote by $T_{{\rm int}}$ the set of all inner representatives
from $T$. Combining properties (i), (ii), (iii) of a Nielsen set
of generators with formulae (\ref{hihj1}) - (\ref{hihj2}) above,
we get
\begin{align*}\label{relativedi} d_i &= t_{1} \cdot \mu(d_i) \cdot t_{2}^{-1}\\
&= (a_{i_1 i_2} \circ m_{i_1 i_2 i_3} \circ \ldots ) \circ m_{i_{j-1} i_{j} i_{j+1}} \circ ( \ldots \circ m_{i_{k} i_{k+1} i_{k+2}} \circ b_{i_{k+1}i_{k+2}})
{\textrm{ or }}\\
d_i &= a_{i_1 i_2} \circ m_{i_1 i_2 i_3} \circ b_{i_2 i_3}  {\textrm{ or }}\\
d_i &= a_{i_1 i_2} \circ b_{i_1 i_2} {\textrm{  or    }} d_i =
h_{i_1},
\end{align*}

where $t_{1}, t_{2} \in T_{{\rm int}}; \,\, \mu(d_i), h_{i_1} \in
Y \cup Y^{-1}$, and $a_{ij}, b_{ij}, m_{ijk} \in \Sigma$.

The system $T_D$ of all $C-$admissible words $t \in T$ is called
the \emph{relative Schreier transversal} of $C$ in $F$ with
respect to $D$ and $\Upsilon$.

\begin{example} Consider an example of a relative Schreier
transversal for a subgroup $C < F(a,b)$. Let $Y = \{ a^3 , b^3 , a
b^2 a^{-1} , b a^3 b^{-1} , b a b^2 a^{-1} b^{-1} \} = \{ h_1 ,
h_2 , h_3 , h_4 , h_5 \}$ be a Nielsen set of generators of $C$.
The subgroup graph $\Gamma_C$ is shown on figure \ref{fig_subgrC},
the edges entering maximal subtree are highlighted.

\begin{figure}[h!]
\begin{center}
\psfrag{h1}{$h_1$}

\psfrag{h2}{$h_2$}

\psfrag{h3}{$h_3$}

\psfrag{h4}{$h_4$}

\psfrag{h5}{$h_5$}

\psfrag{d1}{$d_1$}

\psfrag{d2}{$d_2$}

\psfrag{d3}{$d_3$}

%

\includegraphics[scale=.5]{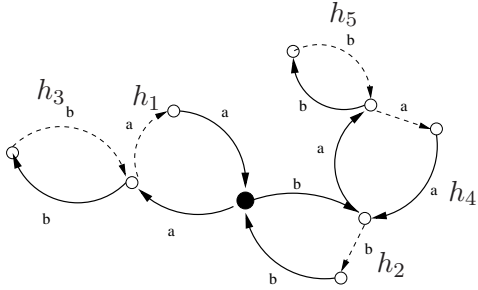}

\end{center}
\caption{Stallings automaton $\Gamma_C$ recognizing $C = \langle
a^3 , b^3 , a b^2 a^{-1} , b a^3 b^{-1} , b a b^2 a^{-1} b^{-1}
\rangle$}\label{fig_subgrC}
\end{figure}

Further, represent elements of $Y \cup Y^{-1}$ in the form $h_i =
s_{1i} \circ \mu_i \circ s_{2i}^{-1}$ (uniquely determined by
$S$):

$$h_1 = a \circ a \circ a = h_6^{-1} , \,\,\,\, \,\,\,\,\, h_2 = b \circ b \circ b = h_7^{-1},$$
$$h_3 = a b  \circ b \circ a^{-1} = h_8^{-1}, \,\,\,\,\,\,\,\,\,\, h_4 = ba  \circ a \circ a b^{-1} = h_9^{-1},$$
$$h_5 = b a b  \circ  b \circ a^{-1} b^{-1} = h_{10}^{-1}.$$

Taking all suitable products of $h_i, h_j$ and $h_k$ for $i, j, k
=1, \ldots, 10$, one can easily construct $\Sigma$. For instance,
$a_{11} = a^3$, $a_{74} = b^{-2}$, $m_{123} = b^3$, $m_{742} =
a^3$, $b_{42} = b^2$ etc.
The element $a$ is pn and a subgroup
$C_a$ is generated by a Nielsen generating set $Z = \{ h_2^{-1}
h_4 h_2, h_1 ,h_2^2 \} = \{ d_1 , d_2 , d_3\}$. Then, taking a
maximal subtree $\Upsilon$ as shown on figure \ref{fig_subgrd},
obtain $$d_1 = a_{74} \circ m_{742} \circ b_{42}, \,\,\,\,\,\,
d_2 = h_1 \,\,\,\,\,\, {\textrm{ and }} \,\,\,\,\,\,d_3 = a_{22}
\circ b_{22}.$$

\begin{figure}[h!]
\begin{center}
\psfrag{h1}{$h_1$}

\psfrag{h2}{$h_2$}

\psfrag{h3}{$h_3$}

\psfrag{h4}{$h_4$}

\psfrag{h5}{$h_5$}

\psfrag{d1}{$d_1$}

\psfrag{d2}{$d_2$}

\psfrag{d3}{$d_3$}

\psfrag{a3}{$a^3$}

\psfrag{b2}{$b^2$}

\psfrag{b4}{$b^4$}

\subfigure[Subgroup graph $\Gamma_D$]{
\includegraphics[scale=.5]{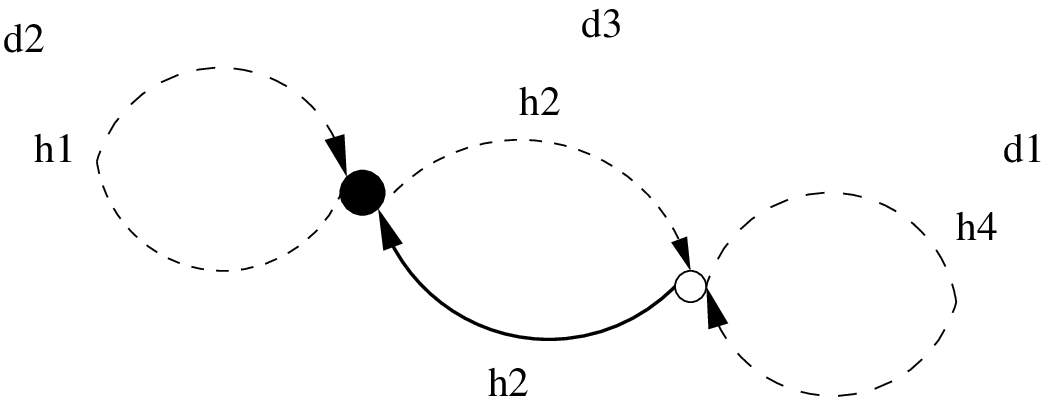}\label{fig_subgrd} } \hspace{10mm}\subfigure[Consolidated graph $\Gamma^{\prime}_D$]{
\includegraphics[scale=.5]{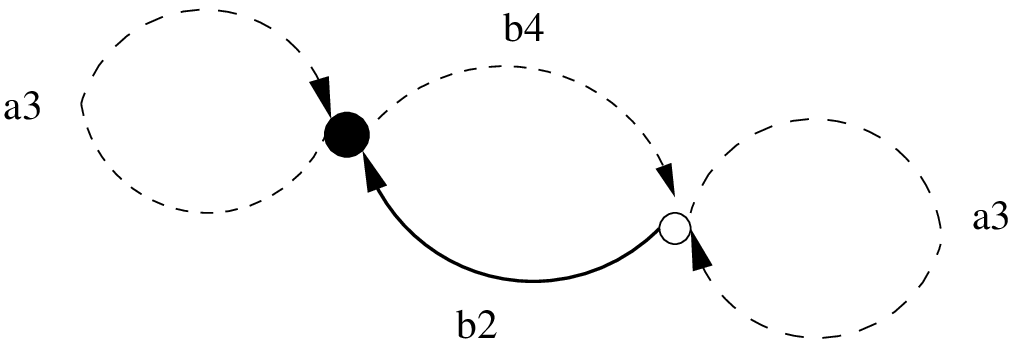}\label{fig_consd} } \hspace{10mm}
\end{center}
\caption{Subgroup graph $\Gamma_D$ and consolidated graph
$\Gamma^{\prime}_D$ for $D = \langle h_2^{-1} h_4 h_2, h_1 ,h_2^2
\rangle$}\label{fig_d}
\end{figure}

Therefore, $m_{742}, h_1$ and $a_{22}$ are central words and $1,
a_{74}, b_{42}^{-1}, b_{22}$ forms the inner part of the relative
Schreier transversal for $C$ in $F$ with respect to $C_a$ and
$\Upsilon$. This relative transversal in terms of $X = \{ a, b \}$
is shown on figure \ref{fig_consd} (and we refer reader to
\cite{3} for more details on consolidated graphs).
\end{example}

In the sequel, we omit a maximal subtree $\Upsilon$ from the
notation assuming it fixed for every relative Schreier
transversal.

In Lemma \ref{le:e(fg)} we showed that every representative $g \in
CfC$ has a unique presentation in the form $g = c_1 f c_2 $, $c_1,
c_2 \in C$ when $C_f = 1$. However, this is not the case for
essential cosets and such a presentation is not unique in general.

Nevertheless, the representatives of essential cosets can be
uniquely written in a similar way:

\begin{lemma}\label{le:ess!} Let $CfC$ be an essential coset and let $T$ be a
Schreier transversal for $C$ in $F$ with respect to $C_f$. Then
every $g \in CfC$ has a unique presentation
$$g = c f t, \,\,\,c_1 \in C, t \in T.$$
\end{lemma}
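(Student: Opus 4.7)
The plan is to prove existence and uniqueness separately, using the characterization of $C_f = C \cap f^{-1}Cf$ as precisely the obstruction to ``pushing through'' $f$.

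For existence, start from any representation $g = c_1 f c_2$ with $c_1, c_2 \in C$, which is available by definition of $CfC$. Since $T$ is a relative Schreier transversal of $C$ with respect to $C_f$, every element of $C$ decomposes uniquely as an element of $C_f$ times an element of $T$, so I can write $c_2 = d \cdot t$ with $d \in C_f$ and $t \in T$. Because $d \in C_f \subseteq f^{-1} C f$, there exists $e \in C$ with $d = f^{-1} e f$, hence $f d = e f$. Substituting gives $g = c_1 f d t = (c_1 e) f t$, and setting $c = c_1 e \in C$ yields the desired presentation $g = c f t$.

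For uniqueness, suppose $g = c f t = c' f t'$ with $c, c' \in C$ and $t, t' \in T$. Rearranging, $(c')^{-1} c = f t' t^{-1} f^{-1}$. The left-hand side lies in $C$, so $f t' t^{-1} f^{-1} \in C$, i.e.\ $t' t^{-1} \in f^{-1} C f = C^f$. On the other hand $t, t' \in T \subseteq C$ gives $t' t^{-1} \in C$, so $t' t^{-1} \in C \cap C^f = C_f$. Thus $t$ and $t'$ represent the same right coset of $C_f$ in $C$; since both are elements of the transversal $T$, we conclude $t = t'$, and then $c = c'$ follows immediately.

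There is essentially no obstacle here: the result is a bookkeeping consequence of the definition of $C_f$ together with the defining property of a relative transversal, and the only ``move'' used is the identity $f d f^{-1} \in C$ for $d \in C_f$. The content of the lemma is really the observation that the ambiguity in the presentation $c_1 f c_2$ described by Lemma~\ref{le:e(fg)} is exactly absorbed by replacing $c_2 \in C$ with $t \in T$, turning the multi-valued description into a canonical form.
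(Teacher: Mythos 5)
Your proof is correct and takes essentially the same route as the paper's: existence by writing $c_2 = d\,t$ with $d \in C_f$, $t \in T$ and absorbing $f d f^{-1} \in C$ into the left-hand factor, and uniqueness by showing $t' t^{-1} \in C \cap f^{-1}Cf = C_f$, so that the transversal property forces $t = t'$ and hence $c = c'$. If anything, your existence step is spelled out more explicitly than the paper's, which simply invokes the definition of the relative Schreier transversal at that point.
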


\begin{proof} Let $g = c_1 f d_1$ be a representative of a some essential double coset $CfC$, $c_1, d_1 \in C$. By definition of a relative Schreier
transversal there is an element $t_1 \in T$ such that $g = c_1 f
t_1$. Now suppose that $g= c_2 f t_2$ is another presentation for
$g$, $c_2 \in C, t_2 \in T$. Then $c_1ft_1 = c_2 f t_2$ implies
$c_1^{-1}c_2 = f t_1 t_2^{-1} f^{-1}$ and since $t_1 t_2^{-1} \in
C$ we have $c_1^{-1}c_2 \in C_f$ and $t_1 \in C_f t_2$. But $t_1,
t_2$ are both representatives of $C_f$ in $C$ and since they are
in the same coset, $t_1 = t_2$ and hence $c_1 = c_2$.
\end{proof}

\subsection{Cancellations in complex $CfC$}\label{subs_weakred}

In this subsection we estimate the size of cancellations in a
double coset depending on a subgroup $C_f$.

Let $Y = \{ h_1, \ldots, h_r  \}$ be some fixed set of Nielsen
generators for $C$. Recall that $M=\left[\frac{1}{2}max\{l_X(h_1),
\ldots, l_X(h_r) \}\right]+1$ and set $p$ equal to the
number of elements in the ball of radius $2M$ in $F(X)$. We shall
use this notation in what follows. The following technical lemma
turns out to be crucial in the proof of Theorem
\ref{th:bounded}.

\begin{lemma}\label{le:key} Let $f_0, f_1 = h_{i_1} f_0 h_{j_1},
\ldots, f_k = h_{i_k} f_{k-1} h_{j_k}$ be a sequence of elements
in $F(X)$ such that $h_{i_1}, h_{j_1}, \ldots, h_{i_k}, h_{j_k}$
are elements of $Y \cup Y^{-1}$; $h_{i_k} \dots h_{i_1}$ and
$h_{j_1} \dots h_{j_k}$  are $Y-$reduced words, and $l_X(f_l) \leq
2M$ for all $l = 1, \ldots, k$. Then for all $k \geq p$ there
exist $l,n \in \{ 1, \ldots, k \}$ and nontrivial $c_l, c_n, d_l,
d_n \in C$ such that $ l \neq n$, and $c_l\neq c_n, d_l \neq d_n$,
and
$$c_lf_ld_l= c_nf_ld_n = c_nf_nd_n= c_lf_nd_l.$$
\end{lemma}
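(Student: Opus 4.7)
The plan is to reduce the statement to a pigeonhole argument on the sequence $f_1,\ldots,f_k$ and then exploit a coincidence $f_l=f_n$ to manufacture the four products explicitly.

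First I would invoke the length bound: each $f_l$ for $l=1,\ldots,k$ lies in the ball of radius $2M$ in $F(X)$, a finite set of cardinality $p$. Thus once $k$ is at least $p$ (counting $f_0$ among the terms, or with a minor off-by-one adjustment if only $f_1,\dots,f_k$ are considered), the pigeonhole principle produces indices $1\le l<n\le k$ with $f_l=f_n$. Unwinding the recursion gives
\[
f_n=\bigl(h_{i_n}\cdots h_{i_{l+1}}\bigr)\,f_l\,\bigl(h_{j_{l+1}}\cdots h_{j_n}\bigr).
\]
Set $c:=h_{i_n}\cdots h_{i_{l+1}}$ and $d:=h_{j_{l+1}}\cdots h_{j_n}$; both are contiguous subwords of the $Y$-reduced words $h_{i_k}\cdots h_{i_1}$ and $h_{j_1}\cdots h_{j_k}$, hence $Y$-reduced and non-empty, so $c,d\in C\setminus\{1\}$. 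The relation $f_l=c\,f_l\,d$ is the key identity: it says the pair $(c^{-1},d)$ is a non-trivial solution of the uniform equation $E(f_l,f_l)$ in the sense of Lemma~\ref{le:e(gg)}.

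Next, I would produce the four elements of $C$ demanded by the conclusion by scaling $c$ and $d$. Define
\[
c_l:=c,\qquad c_n:=c^2,\qquad d_l:=d,\qquad d_n:=d^2.
\]
Since $F$ is torsion-free, $c,c^2,d,d^2$ are all non-trivial, and $c_l\neq c_n$, $d_l\neq d_n$ because $c,d\neq 1$. Using $f_l=cf_l d$, a direct computation gives
\[
c_lf_ld_l=cf_ld=f_l,\qquad c_nf_ld_n=c^2 f_l d^2=c(cf_ld)d=cf_ld=f_l,
\]
and, since $f_n=f_l$, the remaining two expressions $c_nf_nd_n$ and $c_lf_nd_l$ also collapse to $f_l$. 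Thus all four products coincide, establishing the claim.

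The only point requiring a bit of care is the pigeonhole threshold: one must check that $k\ge p$ is enough to force a coincidence inside the index range $\{1,\ldots,k\}$ (as opposed to $\{0,1,\ldots,k\}$); if it is not, the same argument still works verbatim under $k>p$, and this is likely the intended reading. Once the coincidence $f_l=f_n$ is in hand, the construction above is essentially forced and requires no further combinatorial input—the main conceptual content of the lemma is really the existence of a non-trivial self-equation $E(f_l,f_l)$ forced by length-boundedness along a long chain, which is then promoted to the required four-fold equality by squaring.
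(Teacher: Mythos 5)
Your proof is correct and follows essentially the same route as the paper: both rest on the pigeonhole principle applied to the ball of radius $2M$ to force a coincidence $f_l=f_n$ with $l<n$, and both then exhibit explicit nontrivial witnesses that are $Y$-reduced subproducts of $h_{i_k}\cdots h_{i_1}$ and $h_{j_1}\cdots h_{j_k}$. The only (harmless) difference is the choice of those witnesses --- you take $c,c^2,d,d^2$ built from the middle segments $h_{i_n}\cdots h_{i_{l+1}}$ and $h_{j_{l+1}}\cdots h_{j_n}$, so that all four products equal $f_l$, whereas the paper takes the outer tails $h_{i_k}\cdots h_{i_{l+1}}$, $h_{i_k}\cdots h_{i_{n+1}}$, etc., so that all four products equal $f_k$; and the off-by-one in the pigeonhole count that you flag is present in the paper's own proof (which counts $k+1$ elements including $f_0$), so your cautionary remark is well taken but does not distinguish your argument from theirs.
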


\begin{proof} Fix a number $k$; notice that $M$ is at least $1$ so $p \geq 4
m^2 +1$ for a free group $F$ of rank $m$ and therefore $k \geq p
\geq 17.$
 In particular, $k > 0$. The number of all elements of length not greater then $2M$ is equal to $p$ by definition, and so by Dirichlet's drawer
principle there are at least two equal elements among $k+1>p$
elements of such length, say $f_l$ and $f_n$ with $l < n$. The
statement of the lemma is obvious for $f_l = f_n =1$, so suppose
$f_l , f_n$ are non-trivial. Since

$$ h_{i_k} \dots h_{i_1} f_0 h_{j_1} \dots h_{j_k} = h_{i_k} \dots h_{i_{l+1}} f_{l} h_{j_{l+1}} \dots h_{j_k}
= f_k, $$ one can take $c_l =h_{i_k} \dots h_{i_{l}}$, $c_n =
h_{i_k} \dots h_{i_{n}}$ and $d_l =h_{j_l} \dots h_{i_{k}}$, $d_n
=h_{j_n} \dots h_{i_{k}}$. Clearly, $c_l,c_n, d_l,d_n$ are
non-trivial (as they are $Y-$reduced products of non-trivial elements ) and
$c_l \neq c_n$,  $d_l \neq d_n$. Indeed, let, for instance, $c_l
=c_n$; then the product $c_l^{-1} c_n$ is equal to $h_{i{l+1}}
\dots h_{i_k}$. But $h_{i{l+1}} \dots h_{i_k}$ can not be trivial
as a $Y-$reduced word in Nielsen generators. Therefore, $c_lf_ld_l=
c_nf_ld_n$ and other equalities are straightforward. \end{proof}

Let $A_1, A_2, \ldots, A_n$ be a finite number of subsets in
$F(X)$ and consider the function $cn: A_1 \cdot A_2 \cdot \ldots
\cdot A_n \mapsto \mathbb{N}$ which computes the total amount of
$X-$cancellations in the product $a_1 \cdot \ldots \cdot a_n$:

$$cn(a_1, a_2, \ldots, a_n)
=\fracd{1}{2}\left(\mathop{\sum}\limits^{n}_{i=1} l_X(a_i) -
l_X(a_1 a_2 \cdot \ldots \cdot a_n)\right).$$

Observe that for $n=2$ the function $cn(a_1, a_2)$ coincides with
the notion of Lyndon-Chiswell-Gromov product $(a_1, a_2^{-1})$ in
a group $F$ with respect to canonical length function $l_X$ (see
\cite{13,5} for details on length functions, and in \cite{9}
this definition was adapted to hyperbolic metric spaces).
 We say that the product $A_1
\cdot A_2 \cdot \ldots \cdot A_n$ is \emph{$k-$reduced} if there
is a constant $k \ge 0$ such that for every $a_i \in A_i, i=1,
\ldots , n$ the following holds $$cn(a_1, a_2, \ldots, a_n) \leq k.$$
For instance, two singletons $A_1 = \{ u\}, A_2 = \{v \}$ such
that $uv = u\circ v$ form a $0-$reduced set, but we shall omit the
prefix $0-$ below (saving ``$k-$'' only for $k >0$).

\begin{theorem}\label{th:bounded} Let $F$ be a free group with a
finite basis $X$ and $C$ be a finitely generated subgroup of $F$
with a Nielsen set of generators $Y$. Suppose $f \in F
\smallsetminus C$ and $f$ is of minimal length in a double coset
$CfC$, and let $k = 2pM$.

\begin{itemize}
\item If $C$ is $f-$malnormal, then
$CfC$ is $k-$reduced.

\item If $CfC$ is essential, then $CfT$ is $k-$reduced, where
$T$ is the relative Schreier transversal of $C$ in $F$ relative to
$C_f$.
\end{itemize}
 \end{theorem}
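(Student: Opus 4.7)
The plan is to argue by contradiction in each part, using Lemma \ref{le:key} as the main hammer: a large cancellation forces a long sequence of intermediate products of bounded length, which produces a repetition, which in turn contradicts either $f$-malnormality or the uniqueness given by Lemma \ref{le:ess!}.

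Suppose some product $c_1 f c_2$ (resp.\ $c_1 f t$ in the essential case) has $cn(c_1,f,c_2)>2pM$. Write $c_1 = h_{i_L}\cdots h_{i_1}$ and $c_2 = h_{j_1}\cdots h_{j_L}$ as $Y$-reduced words in the Nielsen generators, padding indices so that both sides run to $L$, and in Part 2 expand $t$ analogously via its $C$-admissible decomposition. Set $f_0 = f$ and $f_l = h_{i_l} f_{l-1} h_{j_l}$, so that $f_L = c_1 f c_2$ (resp.\ $c_1 f t$).

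The first and most delicate step is to show that the hypothesis $cn > 2pM$ forces $l_X(f_l) \le 2M$ for at least $p$ consecutive indices $l$. For this I would invoke Lemma \ref{le:35} and the fine case analysis of Lemma \ref{le:34}: each multiplication by an $h \in Y \cup Y^{-1}$ changes the length by at most $2M$, and once the central letters of successive Nielsen generators start cancelling with $f_{l-1}$ the lengths must stabilize in a window around $2M$. A counting argument comparing the total cancellation to the per-step variation shows that the ``plateau'' phase contains at least $p$ indices. This is the main technical obstacle, as one has to handle the asymmetry between the left and right sides and control the cancellation using the survival of central letters as in Lemma \ref{le:34}.

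Once the plateau is established, Lemma \ref{le:key} applies on it, yielding indices $l < n$ and nontrivial $Y$-reduced words $u = h_{i_n}\cdots h_{i_{l+1}}$, $v = h_{j_{l+1}}\cdots h_{j_n}$ in $C$ with $u f_l v = f_l$, and hence $v \in C_{f_l}$. For Part 1, Lemma \ref{le:e(gg)}(3) gives $C_{f_l} \cong C_f = 1$, forcing $v = 1$ and contradicting the nontriviality of the $Y$-reduced product $v$. This gives $cn(c_1,f,c_2) \le 2pM = k$, i.e.\ $CfC$ is $k$-reduced.

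For Part 2 the same construction yields nontrivial $u,v \in C$ with $uf_l v = f_l$, but now $C_{f_l} \cong C_f$ is nontrivial so one cannot conclude $v=1$ directly. Instead I would exploit the fact that $v = h_{j_{l+1}}\cdots h_{j_n}$ is a middle segment of the $C$-admissible expansion of $t \in T$: the equality $c_1 f t = c_l f_l d_l = c_n f_l d_n$ together with Lemma \ref{le:ess!} produces two distinct $C_f$-coset representatives in $C$ hitting the same element of $CfC$, contradicting the fact that $T$ is a system of \emph{distinct} representatives of $C_f$ in $C$. Concretely, one rewrites $d_l,d_n$ in $C$-admissible form using the Schreier property of $T$ (initial segments of representatives are representatives), and the presence of a nontrivial $v$ in the middle of the expansion of $t$ forces two elements of $T$ to lie in the same $C_f$-coset, which is the desired contradiction and establishes the $k$-reducedness of $CfT$.
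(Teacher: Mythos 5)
Your overall architecture coincides with the paper's: assume $cn(c,f,d)>2pM$, build a telescoping sequence $f_0=f,\dots,f_L=g$ by conjugation-like multiplications with Nielsen generators, show that enough consecutive terms have length at most $2M$, apply Lemma \ref{le:key} to produce a repetition, and contradict uniqueness (Lemma \ref{le:e(fg)} in the malnormal case, Lemma \ref{le:ess!} in the essential case). Your endgames are sound and essentially those of the paper: in Part 1 a nontrivial $v\in C_{f_l}\simeq C_f=1$ is absurd, and in Part 2 the repetition violates the uniqueness of the presentation $g=cft$, $t\in T$.

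However, the step you yourself flag as ``the main technical obstacle'' is exactly where the paper's proof lives, and you have not supplied it; as it stands this is a genuine gap. Two concrete problems. First, the indexing $f_l=h_{i_l}f_{l-1}h_{j_l}$ starting from $f_0=f$ cannot yield a plateau of terms of length $\le 2M$ from the outset, because $f$ itself may be arbitrarily long; the paper's first step is a single large jump $f_1=h_{i_l}\cdots h_{i_k}\,f_0\,h_{j_1}\cdots h_{j_s}$ absorbing all generators that cancel directly into $f$, and the two sides need not absorb equally many (whence the bookkeeping $l-1=n-s$), so the symmetric ``padding to $L$'' you propose does not line up with the actual cancellation pattern. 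Second, and more importantly, the assertions that ``the lengths must stabilize in a window around $2M$'' and that ``a counting argument shows the plateau contains at least $p$ indices'' are claims, not arguments. The paper obtains them only after: (i) splitting $f=f'\circ f''$ and running the case analysis (1.1)--(2.4) based on Lemma \ref{le:34} to pin down the possible shapes of the surviving pieces $q_1,q_2$ and bound $l_X(q_1),l_X(q_2)\le 2M$; (ii) reducing to the situation in which all of $h_{i_1},\dots,h_{j_n}$ cancel, so that $g$ is a reduced product of an initial segment of some $a_{i_1i_2}$ with a terminal segment of some $b_{j_{n-1}j_n}$; and (iii) deriving the explicit inequality $0<l_X(g)\le 4M(l+1)-4pM$, which is what simultaneously forces $l>p-1$ and, via the Nielsen properties of $Y$, bounds every intermediate length $l_X(f_1),\dots,l_X(f_{l-1})$ by $2M$. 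Without some version of (i)--(iii), the hypotheses of Lemma \ref{le:key} are not verified and the contradiction never gets off the ground.
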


\begin{proof} Let $g \in CfC$. If $f$ does not cancel completely
in $g = c f d$, then by Lemma \ref{le:34} the length of
cancellations $cn(c,f)$ is bounded above by $M$ (and also
$cn(f,d)\le M$). Hence $cn(c,f,d) = cn(c,f)+cn(f,d) \leq 2M$ and
both (1.) and (2.) follows. If, on the other hand, $f$ cancels
completely then there are the two cases $C_f = 1$ and $C_f \neq 1$ to be
considered.

{\bf 1.} Suppose $C_f= 1$ and $g \in CfC$ such that the total
amount of cancellations in $cfd = g$ is greater than $k$ for $c =
h_{i_1} \ldots h_{i_k}, d = h_{j_1} \ldots h_{j_n} \in C$.

Let $f = f' \circ f''$, where $f'$ cancels completely in $cf$ and
$f''$ cancels in $fd$. Below we list all possible forms of pieces
$f'$ and $f''$. By Lemma \ref{le:34} $f''$  possesses the following
properties:

\begin{itemize}
\item [1.1.] either $(f'')^{-1} = s$, where $s \in S_{\rm int}$
and $s$ is an initial segment of $s_{1j_1}$. Then $f'' d = \g h_{j_2}
\cdots h_{j_n}$ and $\g^{-1}$ is an initial segment of $h_{j_1}^{-1}$ of
length $l_X(\g) < l_X(h_{j_1}) \le 2M$, and $l_X(f'') \le M$;\\

\item [1.2.]  or $(f'')^{-1} = s_{1j_1} \mu_{j_1}$. Then $f'' d =
s_{2j_1}^{-1} h_{j_2} \cdots h_{j_n}$ and $l_X(s_{2j_1}^{-1}) \le
M$, $l_X(f'') \le M$;\\

\item [1.3.]  or $(f'')^{-1} = a_{j_1 j_2} \circ m_{j_1 j_2 j_3}
\circ m_{j_2 j_3 j_4} \circ \cdots \circ \circ m_{j_{s-2} j_{s-1}
j_s} \circ \a \circ \mu_{j_s}$, in which case  $f'' d = \b
h_{j_{s+1}} \cdots h_{j_n}$; recall that $\a,\b$ are elements of
$F(X)$ such that $m_{j_{s-1} j_{s} j_{s+1}}=\a \circ \mu_{j_s}
\circ \b$. Here the length $l_X(f'') \le M$ (see Lemma
\ref{le:34}, (e)) and $l_X(\b) \le M$;\\

\item [1.4.]  or  $(f'')^{-1} = a_{j_1 j_2} \circ m_{j_1 j_2 j_3}
\circ m_{j_2 j_3 j_4} \circ \cdots \circ \circ m_{j_{s-2} j_{s-1}
j_s} \circ \a_1 $, where $f'' d = \a_2 \mu_{j_s} \b h_{j_{s+1}}
\cdots h_{j_n}$; and $\a_1, \a_2, \b$ are such that $m_{j_{s-1}
j_{s} j_{s+1}}=\a_1 \a_2 \circ \mu_{j_s} \circ \b$. Here the
length $l_X(f'') \le M$ (see (f) of Lemma \ref{le:34}) and
$l_X(\a_2 \mu_{j_s} \b) \le l_X(h_{j_s}) \le 2 M$.
\end{itemize}
Similarly, $f'$ and $cf'$ have one of the forms:

\begin{itemize}
\item [2.1.] either $(f')^{-1} = s$, where $s \in S_{\rm int}$ and
$s$ is a terminal segment of $s_{2i_k}$. Then $c f' = h_{i_1} \cdots
h_{i_{k-1}}  \g $ and $\g^{-1}$ is a beginning of $h_{i_k}$ of
length $l_X(\g) \le 2M$, and $l_X(f') \le M$;\\

\item [2.2]  or $(f')^{-1} =  \mu_{i_k} s_{2i_k}^{-1}$. Then $c f' =  h_{i_1} \cdots h_{i_{k-1}} s_{1i_k} $
 and $l_X( s_{1i_k}) \le M$, $l_X(f') \le M$;\\

\item [2.3.] or $(f')^{-1} = \mu_{i_l}  \circ \b \circ m_{i_l
i_{l+1} i_{l+2}} \circ m_{i_{l+1} i_{l+2} i_{l+3}} \circ \cdots
\circ m_{i_{k-2} i_{k-1} i_k} \circ b_{i_{k-1} i_k} $, in which
case $c f' =  h_{i_1} \cdots h_{i_{l-1}} \a$;
 here $\a,\b$ are such that $m_{i_{l-1} i_{l}
i_{l+1}}=\a \circ \mu_{i_l} \circ \b$. Here the length $l_X(f')
\le M$ and $l_X(\a) \le M$;\\

\item [2.4.] or $(f')^{-1} = \b_2 m_{i_l i_{l+1} i_{l+2}} \circ
m_{i_{l+1} i_{l+2} i_{l+3}} \circ \cdots \circ m_{i_{k-2} i_{k-1}
i_k} \circ b_{i_{k-1} i_k}  $, where $c f' =  h_{i_1} \cdots
h_{i_{l-1}}   \a  \circ  \mu_{i_l}  \circ \b_1$; and $\b_1, \b_2,
\a$ are such that $m_{i_{l-1} i_{l} i_{l+1}}=\a \circ \mu_{i_l}
\circ \b_1 \b_2$. Here the length $l_X(f') \le M$ and $l_X(\a
\mu_{i_l} \b_1) \le 2M$.
\end{itemize}

Summarizing conditions (1.1) -- (2.4) and renumbering $h_{i_1},
\ldots, h_{j_n}$ for notational simplicity if necessary, we obtain

\begin{equation}\label{q1q2}
g= h_{i_1} \cdots h_{i_k} f' f''  h_{j_1} \cdots h_{j_n} = h_{i_1}
\cdots h_{i_{l-1}} q_1 q_2 h_{j_{s+1}} \cdots h_{j_n},
\end{equation}
where
$$
q_1 = \left\{
\begin{aligned}
\g_1, {\textrm{ see (2.1), and $l_X(\g_1) \le 2M$,}} \\
s_{1i}, {\textrm{ see (2.2-2.3), and $l_X(s_{1i}) \le M$,}} \\
s_{1i} \mu_i \b_1, {\textrm{ see (2.4), and $l_X(s_{1i} \mu_i \b_1) \le 2 M$}}, \\
\end{aligned}
\right.
$$
and

$$
q_2 = \left\{
\begin{aligned}
\g_2, {\textrm{ see (1.1), and $l_X(\g_2) \le 2M$,}} \\
s_{2j}^{-1}, {\textrm{ see (1.2-1.3), and $l_X(s_{2j}) \le M$,}} \\
\a_2 \mu_i s_{2j}^{-1} , {\textrm{ see (1.4), and $l_X(\a_2 \mu_i s_{2j}^{-1}) \le 2 M$}}. \\
\end{aligned}
\right.
$$

If $q_1 q_2 = q_1 \circ q_2$, then again $cn(c,f,d) = cn(c,f) +
cn(f,d)$ and the total cancellations in $cfd$ are bounded above by
$2M$, in contradiction with assumption $cn(c,f,d) > k$. Therefore,
$q_1 q_2$ is not reduced. Notice that $f \notin C$ implies $g
\notin C$ and therefore $q_1 q_2 \notin C$.  Suppose $q_1 q_2$
cancels to $q_1' \circ q_2'$, where $q_1'$ is an initial part of
$q_1$ and $q_2'$ is a terminal part of $q_2$. Without loss of
generality one can assume that all $h_{i_1}, \ldots, h_{j_n}$
cancel in
$$c f d = h_{i_1} \cdots h_{i_{l-1}} q_1' \cdot q_2' h_{j_{s+1}} \cdots h_{j_n}.$$

By definition of the function $cn$ we have

\begin{equation}\label{cn(cfd)}
cn(c,f,d) = \fracd{1}{2}\left( l_X(c) +l_X(f)+l_X(d) -
l_X(g)\right),
\end{equation}

where $$l_X(c) = \mathop{\sum}\limits_{t=1}^{k}l_X(h_{i_t}) - 2
\mathop{\sum}\limits_{t=1}^{k-1}cn(h_{i_t}, h_{i_{t+1}}),
$$
$$l_X(d) = \mathop{\sum}\limits_{z=1}^{n}l_X(h_{j_z}) - 2
\mathop{\sum}\limits_{z=1}^{n-1}cn(h_{j_z}, h_{j_{z+1}}),
$$
and $l_X(f) = l_X(f')+ l_X(f'')$.

Rearranging summands in (\ref{cn(cfd)}) and (\ref{q1q2}) we get
$$ cn(c,f,d) = cn(h_{i_{l}} \cdot \ldots \cdot h_{i_k}, f')+  cn( f'', h_{j_{1}} \cdot \ldots \cdot
h_{j_{s}})-$$
$$- \mathop{\sum}\limits_{t=1}^{l-1}cn(h_{i_t},
h_{i_{t+1}}) - \mathop{\sum}\limits_{z=s}^{n-1}cn(h_{j_z},
h_{j_{z+1}})+ $$

$$+ \fracd{1}{2} \left( l_X(q_2)+ l_X(q_1)+
\mathop{\sum}\limits_{t=1}^{l-1}l_X(h_{i_t})+
\mathop{\sum}\limits_{z=s+1}^{n}l_X(h_{j_z}) - l_X(g)\right).$$

Further, formulae (1.1) - (2.4) imply $cn(h_{i_{l}} \cdot \ldots
\cdot h_{i_k}, f') \leq M$, $cn( f'', h_{j_{1}} \cdot \ldots \cdot
h_{j_{s}}) \leq M$, and $l_X(q_1) \leq 2M, l_X(q_2) \leq 2M$. By
definition of a Nielsen set of generators $cn(h_i, h_j) \geq 0$
and $l_X(h_i) \leq 2 M$ for all $h_i, h_j \in Y \cup Y^{-1}$.
Therefore,

$$k < cn(c,f,d) \leq (l+n-s+3) M - \fracd{1}{2} l_X(g).$$

Moreover, the assumption that all $h_{i_1}, \ldots, h_{j_n}$
cancel in (\ref{q1q2}) implies $l-1 = n-s$ and

\begin{equation}\label{g=ab}
g = a_{i_1i_2}' \circ b_{j_1 j_2}',
\end{equation}

where $a_{i_1i_2}'$ is an initial segment of $a_{i_1i_2} \in
\Sigma$, $b_{j_1 j_2}'$ is a terminal segment of $b_{j_{n-1} j_n}
\in \Sigma$ and $g$ is not $C-$admissible. Hence, $(i_1, i_2) \neq
(j_{n-1}, j_n)$ and since $l_X(g)
> 0$, we have
\begin{equation}\label{lp}
0 < l_X(g) \leq 4M (l+1) - 4 p M \,\,\,\,\Rightarrow \,\,\,\, l >
p-1.
\end{equation}

Consider the sequence

\begin{align*}
f_0 &= f,\\
f_1 &= h_{i_{l}} \cdot \ldots \cdot h_{i_k} f_0 h_{j_{1}} \cdot
\ldots \cdot
h_{j_{s}},\\
f_2 &= h_{i_{l-1}} f_1 h_{j_{s+1}},\\
&\ldots,\\
f_{l-1} &= h_{i_2} f_{l-2} h_{j_{n-s}},\\
f_{l} &= g.
\end{align*}
Here $l_X(f_0) = l_X(f') + l_X(f'')$ and by (1.1) - (2.4) we have
$l_X(f_0) \le 2M$ and from formula (\ref{g=ab}) one can easily
deduce $l_X(f_{l}) \le 2M$.

Further, since $f_1 = q_1' \circ q_2'$, and $c, d$ are $Y-$reduced
words in the Nielsen set of generators $Y$ (see properties (i) -
(iii)), the equality (\ref{g=ab}) is possible only if all lengths
$l_X(f_1), \ldots, l_X(f_{l-1})$ are bounded above by $2M$.

Observe that $l > p -1$ due to (\ref{lp}), and $f_0, \ldots,
f_{l}$ represent the same double coset. Hence by Lemma
\ref{le:e(gg)}, (3.) we have $C_f = C_{f_0} \simeq \ldots \simeq
C_{f_{l}} \simeq
 C_{f_{l+1}}= C_g$.

Therefore, $f_0, \ldots, f_{l}$ satisfy the assumptions of Lemma
\ref{le:key} and hence there exist elements, say $f_i, f_j, c_i,
d_i, c_j$ and $d_j$
 such that $c_i \neq c_j, d_i \neq d_j$, but $c_i f_i d_i = c_j f_i
 d_j$, a contradiction to the uniqueness of the representation of $f_i$ in $CfC$ (see Lemma \ref{le:e(fg)}).

{\bf 2.} Let $C_f \neq 1$. Then by Lemma \ref{le:ess!} there is a
unique presentation $g = c f d$ for $c \in C$ and $d \in T$ for
the relative Schreier transversal $T=T_{C_f}$ (recall that we
fixed a maximal subtree $\Upsilon$ in $\Gamma_{C_f}$; hence $T$ is
unique). Then considering elements $d \in T$ and arguing as above,
we obtain the equality $g = c_i f_i d_i = c_j f_i d_j$ for different
indexes $i,j$ again, a contradiction with Lemma
\ref{le:ess!}.\end{proof}

\begin{remark} Notice that for essential double cosets the complex $CfC$
might not be $k-$reduced for any $k$. Indeed, let $w$ be a
primitive element of an arbitrary finite rank free group $F$, and
let $C$ be the subgroup of $F$ generated by $w^l$ for some $l>1$.
Then $f=w$ is pn and does not belong to $C$. However, the total
amount of cancellations in the product $w^{kl} w w^{-kl} \in CfC$
can be arbitrary large.
\end{remark}

\subsection{Formal language and automatic properties}\label{section_measurable}

In this subsection we investigate connections between bounded
cancellation and regularity of subsets of $F(X)$. We establish
the regularity of all reduced words representing elements of a
double coset $CfC$ in a free group $F(X)$ using this relation.

\begin{remark}\label{rem:weakred} Suppose $A_1, A_2$ are two regular subsets in $F$. Then the set
$\overline{A_1A_2}$ of all reduced words in $A_1A_2$ is regular in $F(X)$.
\end{remark}
The set $A_1A_2$ is regular by Benois' theorem (see \cite{1.5}), but we also need an automaton accepting $\overline{A_1A_2}$.
We shall show below how to construct this automaton for the case where the product $A_1A_2$ is $k-$reduced. Let $\A_i=(S(\A_i),X,\d_i,S_0(\A_i),F_0(\A_i))$
be a deterministic automaton accepting $A_i$; in particular, it means
$S_0(\A_i) = \{s_0(\A_i)\}$, $i = 1, 2$. One can easily form a
(non-deterministic) automaton $\A$, which accepts the
concatenation $A_1A_2$. Namely, take the same alphabet $X$, $S(\A)
= S(\A_1) \cup S(\A_2)$, the new start state to be $s_0(\A_1)$, the
final states to be $F_0(\A) = F_0(\A_1) \cup F_0(\A_2)$, the transition function to be $\d_1 \cup \d_2$;
and add additional arrows labelled $\varepsilon$ from all states of $F_0(\A_1)$ to $s_0(\A_2)$. Since $A_1A_2$ is
$k-$reduced, the length of cancellations between the elements $a_1
a_2$, where $a_1 \in A_1, a_2 \in A_2$ is bounded by $k$.
Therefore, the set $U$ of words $u_j \in F(X)$ such that $a_1 = b_1
\circ u_j^{-1}$, $a_2 = u_j \circ b_2$, and $a_1 a_2 = b_1 \circ
b_2$, where $a_i \in A_i$, $b_i \in F(X)$, is finite. Since $\A_1$
is a finite state automaton, for every $u_j \in U$ there is a
finite set $P_{u_j} = \{ p_{u_j} \in S(\A_1): {\textrm{ $u_j^{-1}$
is readable from }} p_{u_j} {\textrm{ to $f_0$ for some $f_0 \in
F_0(\A_1)$}}\}$; by the same argument the set $Q_{u_j} = \{
q_{u_j} \in S(\A_2): {\textrm{ $u_j$ is readable from }} s_0(\A_2)
{\textrm{ to $q_{u_j}$ }} \}$ is finite. Add
$\varepsilon-$transitions from the states of $P_{u_j}$ to the
states of $Q_{u_j}$ for all $u_j$.
 Clearly, $\overline{A_1A_2} \subseteq L(\A)$ by construction, and moreover, $\overline{A_1A_2}
= L(\A) \cap F(X)$. Thus $\overline{A_1A_2}$ is an intersection of regular sets, so is itself regular.

The {\em double-based cone } with bases $w_1, w_2$ is the set of all
reduced words $w_1 \circ f \circ w_2$ in $F(X)$, starting with $w_1$ and ending with
$w_2$.
\begin{corollary}\label{cor:cones_reg} Every double-based cone $C(w_1,w_2)$ in a finite rank free group is a regular set.
\end{corollary}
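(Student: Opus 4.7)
The plan is to express $C(w_1,w_2)$ as a \emph{$0$-reduced} product of three regular sets, namely $\{w_1\}\cdot A\cdot\{w_2\}$ where $A\subseteq F(X)$ is a carefully chosen regular ``admissible middles'' set, and then invoke Remark \ref{rem:weakred} (iterated to three factors) to conclude regularity.

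First, I would define $A\subseteq F(X)$ as the set of reduced words $f$ for which $w_1\circ f\circ w_2$ is already reduced, i.e.\ no cancellation occurs at either boundary. Concretely, if $w_1,w_2\neq 1$, then $f\in A$ iff either $f=1$ and the last letter of $w_1$ differs from the inverse of the first letter of $w_2$, or $f\neq 1$, its first letter is not the inverse of the last letter of $w_1$, and its last letter is not the inverse of the first letter of $w_2$. (If either $w_i$ is empty, the corresponding boundary condition is vacuous.) This is a purely local condition on the first and last letters of $f$, so a small deterministic automaton scanning reduced words recognises $A$; hence $A$ is regular. The singletons $\{w_1\}$ and $\{w_2\}$ are obviously regular.

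Second, by the very definition of $A$, the product $\{w_1\}\cdot A\cdot\{w_2\}$ is $0$-reduced: for every $f\in A$ the concatenation $w_1 f w_2$ is already reduced, so it coincides with its own $X$-reduced form. I would then apply Remark \ref{rem:weakred} first to $\{w_1\}$ and $A$ to obtain regularity of $\overline{\{w_1\}\cdot A}$, and then once more to $\overline{\{w_1\}\cdot A}$ and $\{w_2\}$ to obtain regularity of $\overline{\{w_1\}\cdot A\cdot\{w_2\}}$. (The construction given in the Remark for $k$-reduced products extends transparently to any finite number of factors, so one could equivalently invoke a single three-factor version.) Because the product is $0$-reduced, the resulting reduced set coincides literally with $\{w_1\}\cdot A\cdot\{w_2\}$, which by construction equals $C(w_1,w_2)$.

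There is essentially no deep obstacle here: the regularity of each factor is immediate, the $0$-reducedness is built into the choice of $A$, and the Remark supplies the rest. The only point demanding any care is the case analysis in defining $A$ (handling $f=1$, and empty $w_1$ or $w_2$), which is purely bookkeeping.
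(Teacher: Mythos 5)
Your proof is correct, and it takes a somewhat different route from the paper's. The paper first peels off all but the innermost letters, writing $C(w_1,w_2)=x_{i_1}\cdots x_{i_{s-1}}\circ C(x_{i_s},x_{j_1})\circ x_{j_2}\cdots x_{j_t}$, and then exhibits an explicit deterministic automaton recognizing the one-letter cone $C(a,b)$ (with a recipe for extending it to higher rank); the concatenation with the fixed prefix and suffix is then immediate. You instead isolate the set $A$ of admissible middles, observe that $\{w_1\}\cdot A\cdot\{w_2\}$ is a $0$-reduced product of regular sets equal to $C(w_1,w_2)$, and invoke (an iterated form of) Remark~\ref{rem:weakred}. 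Both arguments are elementary, effective, and avoid Benois' theorem, which is the point of the paper's ``direct proof.'' Your version is more uniform and hides the case analysis inside the definition of $A$ rather than in the shape of an explicit automaton; the paper's version buys a concrete, small machine that it can display and reuse. Two minor remarks: since your product is $0$-reduced, you do not actually need the $\varepsilon$-transition surgery of Remark~\ref{rem:weakred} at all --- plain concatenation of the three automata already accepts only reduced words, so ordinary closure of regular languages under concatenation (intersected with the reduced words) suffices; and when you apply the Remark a second time you should note, as you implicitly do, that every word of $\{w_1\}\cdot A$ ends in a letter distinct from the inverse of the first letter of $w_2$, so that the second product is again $0$-reduced.
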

\begin{proof} One can prove this statement using Benois' theorem, but we are interested in it's direct proof and corresponding automaton. Let
$w_1 = x_{i_1} \cdots x_{i_s}$, $w_2 = x_{j_1} \cdots x_{j_t}$.
Then by definition an element $f \in C(w_1,w_2)$ is an $X-$reduced
word of the form $f = x_{i_1} \cdots x_{i_s} \circ y \cdots z
\circ x_{j_1} \cdots x_{j_t}$, with $y \neq x_{i_s}^{-1}$, and $z
\neq x_{j_1}^{-1}$. Thus, it is sufficient to prove the regularity
of a cone with bases $x_{i_s}$ and $x_{j_1}$. Indeed, if
$C(x_{i_s}, x_{j_1})$ is regular, then so is $C(w_1, w_2) =
x_{i_1} \cdots x_{i_{s-1}} \circ C(x_{i_s}, x_{j_1}) \circ x_{j_2}
\cdots x_{j_t}$ as it is a concatenation of regular sets in
$F(X)$. For notational simplicity we construct a deterministic
automaton $\A$ recognizing the cone $C(x_{i_s}, x_{j_1})$ in $F(X) =
F(a,b)$, for bases equal to $a$ and $b$ respectively. An
automaton $\A$ recognizing $C(a,b)$ is shown in Figure
\ref{fig_Cab} (the tailed arrow corresponds to the initial state plus $a-$transition, and the final state is labelled by double circle).

\begin{figure}[h!]
\begin{center}

\psfrag{a}{$a$}

\psfrag{b}{$b$}

\psfrag{a1}{$a^{-1}$}

\psfrag{b1}{$b^{-1}$}

\includegraphics[width=6cm]{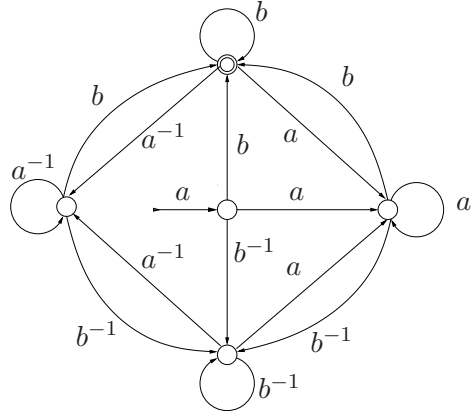}\\
\caption{The automaton $\A$ recognizing the cone $C(a,b)$.}\label{fig_Cab}
\end{center}
\end{figure}

If now the rank of the free group $F$ is greater than $2$, $\A$ can be easily modified to recognize $C(a,b)$ in $F=F(X)$. Namely, it
is necessary to add a new state $q_x$ corresponding to each element of $X \cup X^{-1}$ (except $a^{\pm 1}, b^{\pm 1}$). From every new state $q_{x}$ we add a $y-$transition to every other state $q_y$, including $q_x$, but excluding $q_{x^{-1}}$. The arrows from basis $a$ and to basis $b$ can be added in similar way.
 \end{proof}

\begin{theorem}\label{th:cfcreg} Let $C$ be a finite rank subgroup of a free
group $F(X)$. Then the set of all $X-$reduced words representing
elements of $CfC$ is regular in $F(X)$. Moreover, an automaton which accepts this set can be constructed effectively by $C$ and $f$ if $C$ is $f-$malnormal, or by $C,f$ and a relative Schreier transversal $T$ of $C$ with respect to $C_f$, otherwise.
\end{theorem}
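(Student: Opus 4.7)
The strategy is to express the set of reduced words of $CfC$ as the set of $X$-reduced words in a concatenation of effectively constructible regular sets whose total product has bounded cancellation, and then invoke Remark \ref{rem:weakred} to produce the final automaton. Preliminarily I would replace $f$ by a minimal-length representative $f_0 \in CfC$ (effectively computable by feeding $f$ through elements of $C$ read in $\Gamma_C$), so that $CfC = Cf_0 C$, and then decide whether $C$ is $f$-malnormal by constructing $\Gamma_{C_f}$ as the core of the pullback of $\Gamma_C$ with its $f$-translate (exactly as in the proof of Corollary \ref{cor:listessential}) and checking whether the component of the basepoint is trivial.

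In the $f$-malnormal case, Theorem \ref{th:bounded}(1) gives that $C \cdot \{f_0\} \cdot C$ is $k$-reduced with $k = 2pM$ a computable constant. The Stallings automaton $\Gamma_C$ (with $1$ as the unique initial and accepting state) accepts $C$ as a regular subset of $F(X)$, and $\{f_0\}$ is trivially regular. Two applications of Remark \ref{rem:weakred}---first to $C \cdot \{f_0\}$, then to the resulting regular set paired with $C$---produce a finite automaton whose language is exactly the set of $X$-reduced words of $CfC$.

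In the essential case, Lemma \ref{le:ess!} gives the set equality $CfC = C \cdot \{f_0\} \cdot T$, where $T$ is the relative Schreier transversal of $C$ with respect to $C_f$, and Theorem \ref{th:bounded}(2) tells us this product is again $k$-reduced. The one remaining ingredient is that $T$ itself is a regular, effectively constructible subset of $F(X)$: the transversal consists of labels of non-backtracking paths from $1$ in the spanning subtree $\Upsilon^{\ast}$ of $\Gamma^{\ast}_{C_f}$, and since $\Upsilon^{\ast}$ has a finite core piece inside $\Gamma_{C_f}$ with infinite Cayley-graph-type hanging trees attached at finitely many boundary vertices, it can be folded into a finite automaton by adding finitely many auxiliary states that encode the forbidden return letter at each boundary. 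With automata for $C$, $\{f_0\}$ and $T$ in hand, two further applications of Remark \ref{rem:weakred} complete the construction. The main technical point is the careful verification that this folding of $\Gamma^{\ast}_{C_f}$ respects the $C$-admissibility structure of $T$; once that is in place, the regularity of $CfC$ follows directly from Theorem \ref{th:bounded} and Remark \ref{rem:weakred}.
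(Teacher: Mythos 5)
Your proof follows essentially the same route as the paper's: represent each element of $CfC$ uniquely via Lemma \ref{le:e(fg)} (malnormal case) or Lemma \ref{le:ess!} (essential case), invoke the $k$-reducedness of $CfC$ resp.\ $CfT$ from Theorem \ref{th:bounded}, and assemble the accepting automaton by applying Remark \ref{rem:weakred} to automata for the regular factors. The only differences are matters of detail rather than of approach: the paper simply cites \cite{7} for the effective regularity of $C$, $fC$ and $fT$ where you sketch the construction of the automaton for $T$ directly, and you make explicit the preliminary pass to a minimal-length representative $f_0$ (needed for Theorem \ref{th:bounded} to apply), a point the paper leaves implicit.
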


\begin{proof} If $C_f = 1$, then every $X-$reduced word in $CfC$ has a unique representation in terms of this complex, by Lemma \ref{le:e(fg)}.
If $C_f \neq 1$, then every word in $CfC$ can be represented by an element of the complex $CfT$, for a relative Schreier transversal
$T$ of $C$ with respect to $C_f$, by Lemma \ref{le:ess!}. In both cases the sets $CfC$ and $CfT$ are regular: it follows either from Benois' theorem, or from the following direct construction. The sets $CfC$ and $CfT$ are $k-$reduced in $F(X)$, by Theorem \ref{th:bounded}. Further, since $C$, $fC$ and $fT$ are regular in
$F(X)$ and corresponding automata can be constructed effectively (see, for example, \cite{7}), the result follows from Remark \ref{rem:weakred} and it's proof.
\end{proof}

\section*{Acknowledgments}
The first author is grateful to A.~Duncan, A.~A.~Klyachko and V.~A.~Romankov for useful remarks.
The second author was partially supported by RFFI grant 11-01-00081.

\end{document}